\newtheorem{theorem}{Theorem}[section]
\newtheorem{lemma}[theorem]{Lemma}
\newtheorem{proposition}[theorem]{Proposition}
\newtheorem{corollary}[theorem]{Corollary}
\theoremstyle{definition}
\newtheorem*{definition}{Definition}
\theoremstyle{remark}
\newtheorem*{remark*}{Remark}
\numberwithin{equation}{section}
\def\N{{\mathbb
N}}
\def\R{{\mathbb R}}
\newcommand{\xast}{x^{\ast}}
\newcommand{\yast}{y^{\ast}}
\newcommand{\Yast}{Y^{\ast}}
\newcommand{\Xast}{X^{\ast}}
\newcommand{\eps}{\varepsilon}
\begin{document}

\title{Stability of average roughness, octahedrality, and strong diameter two properties of Banach spaces\\ with respect to absolute sums}
\date{26.04.2013}

\author{Rainis Haller}
\curraddr{}
\thanks{This research was supported by institutional research funding IUT20-57 of the Estonian Ministry of Education and Research.}

\author{Johann Langemets}
\curraddr{}
\thanks{}

\author{Rihhard Nadel}
\curraddr{}
\thanks{}

\subjclass[2010]{Primary 46B20, 46B22}

\keywords{Average rough norm, octahedral norm, diameter 2 property, Daugavet property}

\date{}

\dedicatory{}

\begin{abstract}
We prove that, if Banach spaces $X$ and $Y$ are $\delta$-average rough, then their direct sum with respect to an absolute norm $N$ is $\delta/N(1,1)$-average rough. In particular, for octahedral $X$ and $Y$ and for $p$ in $(1,\infty)$ the space $X\oplus_p Y$ is $2^{1-1/p}$-average rough, which is in general optimal. Another consequence is that for any $\delta$ in $(1,2]$ there is a Banach space which is exactly $\delta$-average rough. We give a complete characterization when an absolute sum of two Banach spaces is octahedral or has the strong diameter 2 property. However, among all of the absolute sums, the diametral strong diameter 2 property is stable only for 1- and $\infty$-sums.
\end{abstract}

\maketitle
\section{Introduction}\label{sec: 0. Introduction}
A real Banach space $X$ is said to be \emph{octahedral} if, for every finite-dimensional subspace $E$ of $X$ and every $\varepsilon>0$, there is a norm one element $y\in X$ such that
\[
\|x+y\|\geq (1-\varepsilon)(\|x\|+\|y\|)\qquad \text{for all $x\in E$}.
\]
Octahedral Banach spaces were introduced by Godefroy and Maurey \cite{godefroy_normes} (see also \cite{godefroy_metric_1989}) in order to characterize Banach spaces containing an isomorphic copy of $\ell_1$. This notion has recently been useful in studying the diameter 2 properties (see \cite{becerra_guerrero_octahedral_2014}, \cite{becerra_guerrero_octahedral_2015}, \cite{haller_duality_2015}, and \cite{haller_rough_2017}).
It is known that  octahedrality is stable by taking $\ell_1$- or $\ell_\infty$-sums, and it is not stable by taking $\ell_p$-sums for $p\in(1,\infty)$ (see \cite[Proposition~3.12]{haller_duality_2015}). More precisely, for nontrivial Banach spaces $X$ and $Y$,
\begin{itemize}
\item if $X$ or $Y$ is octahedral, then $X\oplus_1 Y$ is octahedral; 
\item if $X$ and $Y$ are both octahedral, then  $X\oplus_\infty Y$ is octahedral;
\item $X\oplus_p Y$ is not octahedral for $p\in(1,\infty)$.
\end{itemize}
We extend these results quantitatively in two directions, instead of octahedral spaces we consider more general average rough spaces, and we also consider absolute normalized norm on direct sum.

Let $\delta>0$. A Banach space $X$ is said to be \emph{$\delta$-average rough} \cite{deville_dual_1988} if, whenever $n\in\N$ and $x_1,\dotsc,x_n \in S_X$,
\[
\limsup_{\|y\|\to0}\frac{1}{n}\sum_{i=1}^n \frac{\|x_i+y\|+\|x_i-y\|-2}{\|y\|}\geq\delta.
\]

Banach spaces which are $2$-average rough are exactly the octahedral ones (see \cite{becerra_guerrero_octahedral_2014}, \cite{deville_dual_1988}, and \cite{godefroy_metric_1989}).

We recall that a norm $N$ on $\mathbb R^2$ is called \emph{absolute} (see \cite{bonsall_numerical_1973}) if
\[
N(a,b)=N(|a|,|b|)\qquad\text{for all $(a,b)\in\mathbb R^2$}
\]
and \emph{normalized} if
\[
N(1,0)=N(0,1)=1.
\]
For example, the $\ell_p$-norm $\|\cdot\|_p$ is absolute and normalized for every $p\in[1,\infty]$. If $N$ is an absolute normalized norm on $\mathbb R^2$ (see \cite[Lemmata~21.1 and 21.2]{bonsall_numerical_1973}), then 

\begin{itemize}
\item $\|(a,b)\|_\infty\leq N(a,b)\leq \|(a,b)\|_1$ for all $(a,b)\in\mathbb R^2$;
\item if $(a,b),(c,d)\in\mathbb R^2$ with $|a|\leq |c|\quad \text{and}\quad |b|\leq |d|,$ then \[N(a,b)\leq N(c,d);\]
\item the dual norm $N^\ast$ on $\mathbb \R^2$ defined by 
\[
N^\ast(c,d)=\max_{N(a,b)\leq 1}(|ac|+|bd|) \qquad\text{for all $(c,d)\in\mathbb R^2$}
\]
is also absolute and normalized. Note that $(N^\ast)^\ast=N$.
\end{itemize}

If $X$ and $Y$ are Banach spaces and $N$ is an absolute normalized norm on $\R^2$, then we denote by $X\oplus_N Y$ the product space $X\times Y$ with respect to the norm
\[
\|(x,y)\|_N=N(\|x\|,\|y\|) \qquad\text{for all $x\in X$ and $y\in Y$}.
\]
In the special case where $N$ is the $\ell_p$-norm, we write $X\oplus_p Y$.
Note that $(X\oplus_N Y)^\ast=\Xast\oplus_{N^\ast} \Yast$.

By a \emph{slice} of $B_X$ we mean a set of the form
\[
S(B_X, x^*,\alpha)=\{x\in B_X\colon x^*(x)>1-\alpha\},
\]
where $x^*\in S_{X^*}$ and $\alpha>0$. A \emph{convex combination of slices} is a set of the form $\sum_{i=1}^n \lambda_i S_i$, where $n\in\mathbb N$, $\lambda_1,\dotsc,\lambda_n\geq 0$ with $\sum_{i=1}^n\lambda_i=1$, and $S_1,\dotsc,S_n$ are slices of $B_X$. 

A dual characterization of $\delta$-average roughness is well known. The dual space $\Xast$ is $\delta$-average rough if and only if the diameter of every convex combination of  slices of $B_{X}$ is greater than or equal to $\delta$ \cite[Theorem~2]{deville_dual_1988}. In particular, $\Xast$ is octahedral 
if and only if the diameter of every convex combination of  slices of $B_{X}$ is $2$ (see also \cite{becerra_guerrero_octahedral_2014},  \cite{godefroy_metric_1989}, and \cite{haller_duality_2015}). According to \cite{abrahamsen_remarks_2013}, the latter extreme property of a Banach space $X$ is known as the \emph{strong diameter 2 property}. An important class of Banach spaces with the strong diameter 2 property and which are octahedral are the Daugavet spaces (see \cite{abrahamsen_remarks_2013} and \cite{becerra_guerrero_octahedral_2014}). 

In \cite{bilik_narrow_2005}, it is proved that the only absolute sums which preserve the Daugavet property are the $\ell_1$- and $\ell_\infty$-sum. Surprisingly, there are  many absolute norms which preserve octahedrality and the strong diameter 2 property (see Section~\ref{sec: OH and SD2P}).

Recently, Becerra~Guerrero, {L}\'{o}pez-{P}\'{e}rez, and Rueda~Zoca introduced a sharper version of the strong diameter 2 property (see \cite{becerra_guerrero_diametral_2015}). A Banach space $X$ has the
\emph{diametral strong diameter 2 property} if for every convex combination $C$ of relatively weakly open subsets of $B_X$, for every $x\in C$ and $\varepsilon>0$ there is a $y\in C$ such that
\[
\|x-y\|>1+\|x\|-\varepsilon.
\]

By \cite{becerra_guerrero_diametral_2015}, Daugavet spaces have the diametral strong diameter 2 property and the diametral strong diameter 2 property implies the strong diameter 2 property. The  Banach space $c_0$ is an example of a space with the strong diameter 2 property and failing the diametral strong diameter 2 property. As far as the authors know it is an open question posed in \cite{becerra_guerrero_diametral_2015} whether there is a Banach space with the diametral strong diameter 2 property and failing the Daugavet property. Our preliminary idea to attack this question was to check whether besides $\ell_1$- and $\ell_\infty$-norm (see \cite{becerra_guerrero_diametral_2015} and \cite{haller_diametral_2016}) there are more absolute norms which preserve the diametral strong diameter 2 property. However, there are none (see Section~\ref{sec: OH and SD2P}).

We now describe the contents of this paper. In Section~\ref{sec: average rough}, we prove (see Theorem~\ref{thm: abs sum delta rough}) that for $\delta$-average rough Banach spaces $X$ and $Y$ their absolute sum $X\oplus_N Y$ is $\gamma\delta$-average rough, where $\gamma>0$ is such that $\gamma N(\cdot)\leq \Vert \cdot\Vert_{\infty}$. In particular, we get that, for $1<p<\infty$, the $\ell_p$-sum $X\oplus_p Y$ of octahedral Banach spaces $X$ and $Y$ is $2^{1-1/p}$-average rough (see Corollary~\ref{cor: p-sum of OH spaces is 2^{1/q}-average rough}). Moreover, this number $2^{1-1/p}$ is in general the largest possible one (see Proposition~\ref{prop: l1 p-sum is not average rough}). As a consequence, we obtain that for any $\delta\in (1,2]$ there is a Banach space which is exactly $\delta$-average rough (see Theorem~\ref{thm: exactly delta-average rough}). We end this section by describing when the $\delta$-average roughness passes down from the absolute sum to one of the factors (see Proposition~\ref{prop: roughness from sum to factor}).

In Section~\ref{sec: OH and SD2P}, we characterize those absolute norms for which the direct sum of two octahedral Banach spaces is octahedral (see Theorem~\ref{thm: absolute sum is OH}). As a consequence, we can characterize those absolute norms for which the direct sum of two separable Banach spaces with the almost Daugavet property has the almost Daugavet property (see Corollary~\ref{cor: almost Daugavet}). By duality, we can characterize the absolute norms which preserve the strong diameter 2 property (see Theorem~\ref{thm: absolute sum has SD2P}).  We end this section by proving that, similarly to the Daugavet property, among all of the absolute norms the diametral strong diameter 2 property is stable only for $\ell_1$- and $\ell_\infty$-sums (see Corollary~\ref{cor: abs sum DSD2P}).

\section{Average roughness of absolute sums}\label{sec: average rough}

We begin by pointing out some equivalent but sometimes more convenient formulations of average roughness, which are easily derived from the definition.

\begin{proposition}\label{prop: reformulations of rough}
	Let $X$ be a Banach space and $\delta>0$. The following assertions are equivalent:
	\begin{enumerate}\label{KdK krit}
		\item[\rm (i)] $X$ is $\delta$-average rough;
		\item[\rm (ii)] whenever $n\in \mathbb{N}$,  $x_{1},\dotsc,x_{n}\in X$,  and $\varepsilon>0$ there is a $y\in X$ such that $\Vert y \Vert\leq \varepsilon$ and
		\begin{displaymath}
		\frac{1}{n}\sum_{i=1}^{n}\Big( \Vert x_{i}+y\Vert +\Vert x_{i}-y\Vert \Big)> (\delta-\varepsilon)\Vert y \Vert +\frac{2}{n}\sum_{i=1}^{n}\Vert x_{i}\Vert;
		\end{displaymath}
		\item[\rm (iii)] whenever $n\in \mathbb{N}$, $x_{1},\ldots,x_{n}\in S_{X}$, and $\varepsilon>0$ there is a $y\in X$ such that $\Vert y \Vert\leq \varepsilon$ and
		\begin{displaymath}
		\frac{1}{n}\sum_{i=1}^{n}\Big( \Vert x_{i}+y\Vert +\Vert x_{i}-y\Vert \Big)> (\delta-\varepsilon)\Vert y \Vert +2;
		\end{displaymath}
	\end{enumerate}
\end{proposition}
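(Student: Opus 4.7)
My plan is to organize the proof into three steps: first establish the elementary equivalence (i)$\Leftrightarrow$(iii), then note that (ii)$\Rightarrow$(iii) is immediate, and finally handle the substantive implication (iii)$\Rightarrow$(ii). Throughout I may assume $\delta \leq 2$, since otherwise all three conditions fail trivially: the reverse triangle inequality forces $(\|x+y\|+\|x-y\|-2\|x\|)/\|y\| \leq 2$ for any single unit vector $x$.

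For (i)$\Leftrightarrow$(iii), I simply rearrange the inequality in (iii) as
\[
\frac{1}{n}\sum_{i=1}^{n}\frac{\|x_{i}+y\|+\|x_{i}-y\|-2}{\|y\|} > \delta-\varepsilon.
\]
The statement in (iii) then asserts that for every $\varepsilon>0$ this average exceeds $\delta-\varepsilon$ for some $y$ with $0<\|y\|\leq\varepsilon$, which is the standard $\varepsilon$-reformulation of the limsup condition in (i). The implication (ii)$\Rightarrow$(iii) requires nothing more than inserting $\|x_i\|=1$ so that $\frac{2}{n}\sum\|x_i\|=2$.

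The main content is (iii)$\Rightarrow$(ii). Given $x_1,\dotsc,x_n\in X$ and $\varepsilon>0$, I write $x_i=r_iu_i$ with $r_i=\|x_i\|$ and $u_i\in S_X$ (choosing $u_i$ arbitrarily when $r_i=0$), and set $R=\max_i r_i$. The scales $r_i$ are generally distinct, which is the obstacle: we cannot directly apply (iii) to the unit vectors $u_i$ because the single $y$ we produce must simultaneously serve all the $x_i$'s. To bridge the scales I exploit the convexity of $\phi_i(z):=\|u_i+z\|+\|u_i-z\|$ with $\phi_i(0)=2$, which yields, for every $\lambda\geq 1$,
\[
\phi_i(\lambda z)-2 \geq \lambda\bigl(\phi_i(z)-2\bigr).
\]
I then look for $y$ of the form $y=Rz$. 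For each $i$ with $r_i>0$, taking $\lambda=R/r_i\geq 1$ and multiplying by $r_i$ gives
\[
\|x_i+y\|+\|x_i-y\|-2\|x_i\| \;\geq\; R\bigl(\phi_i(z)-2\bigr),
\]
whereas for $r_i=0$ the left-hand side equals $2\|y\|=2R\|z\|$.

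Finally, I apply (iii) to the family $(u_i)_{r_i>0}$ to pick, for a suitably small $\varepsilon'>0$, a $z\in X$ with $\|z\|\leq\varepsilon/R$ and $\frac{1}{n_1}\sum_{r_i>0}(\phi_i(z)-2)>(\delta-\varepsilon')\|z\|$, where $n_1=\#\{i:r_i>0\}$. Averaging the two estimates above and using $\delta\leq 2$ to absorb the contribution of the indices with $r_i=0$ (since then $2\cdot\tfrac{n-n_1}{n}\geq\tfrac{n-n_1}{n}\delta$), a short arithmetic check shows that $\varepsilon'$ can be chosen small enough so that the resulting average exceeds $(\delta-\varepsilon)R\|z\|=(\delta-\varepsilon)\|y\|$, producing the required $y$ of norm at most $\varepsilon$. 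The only delicate point is the bookkeeping that turns the convexity inequality plus (iii) into the bound with the correct constant, which I expect to be routine once $\lambda=R/r_i\geq 1$ is in place.
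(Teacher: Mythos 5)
The paper gives no proof of this proposition at all --- it is stated with the remark that the equivalences ``are easily derived from the definition'' --- so there is no argument of the authors' to compare yours against; the only question is whether your derivation is correct, and it is. The equivalence (i)$\Leftrightarrow$(iii) and the implication (ii)$\Rightarrow$(iii) are handled exactly as they should be, and your treatment of (iii)$\Rightarrow$(ii) identifies the one genuinely non-obvious point (a single $y$ must work for vectors of different norms) and resolves it correctly: convexity of $\phi_i(z)=\|u_i+z\|+\|u_i-z\|$ together with $\phi_i(0)=2$ gives $\phi_i(\lambda z)-2\ge\lambda(\phi_i(z)-2)$ for $\lambda\ge1$, and rescaling by $\lambda=R/r_i$ transfers the estimate from the unit vectors $u_i$ to the $x_i$ with the constant $R\|z\|=\|y\|$ intact. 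The arithmetic closes as you indicate; in fact taking $\varepsilon'=\min(\varepsilon,\varepsilon/R)$ already suffices, since $n_1(\delta-\varepsilon)+2(n-n_1)\ge n(\delta-\varepsilon)$ follows from $\delta\le2$. Two degenerate cases are glossed over but are immediate: if all $r_i=0$ then $R=0$ and $y=Rz$ cannot be used, but any $y$ with $0<\|y\|\le\varepsilon$ works directly because $2\|y\|>(\delta-\varepsilon)\|y\|$; and the reduction to $\delta\le2$ should be stated for (ii) as well as (i) and (iii), which your $n=1$ unit-vector argument already covers. With those one-line remarks added, the proof is complete.
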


\begin{remark*}\label{rem: 1/n = lambda}
The equivalences in Proposition~\ref{prop: reformulations of rough} remain true if one of the following holds
\begin{itemize}
\item[(a)]one replaces $\frac{1}{n}\sum_{i=1}^n$ with $\sum_{i=1}^n\lambda_i$, where $\lambda_i>0$ and $\sum_{i=1}^n\lambda_i=1$;
\item[(b)] one replaces $\|y\|\leq \varepsilon$ with $\|y\|=\varepsilon$.
\end{itemize}
\end{remark*}

The $\ell_1$-sum of two Banach spaces inherits its $\delta$-average roughness from one of the factors.
\begin{proposition}
Let $X$ and $Y$ be Banach spaces. If $X$ or $Y$ is $\delta$-average rough for some $\delta>0$, then $X\oplus_1 Y$ is also $\delta$-average rough.  
\end{proposition}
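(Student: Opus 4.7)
The plan is to reduce the $\delta$-average roughness of $X \oplus_1 Y$ to that of (the roughly-assumed factor) $X$ by using perturbations supported entirely on the $X$-coordinate. Without loss of generality, assume $X$ is $\delta$-average rough. I would verify condition (iii) of Proposition~\ref{prop: reformulations of rough} for $X\oplus_1 Y$: given $n\in\N$, norm-one elements $(x_1,y_1),\dots,(x_n,y_n)\in S_{X\oplus_1 Y}$, and $\varepsilon>0$, I need to exhibit a small perturbation realizing the required inequality.

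The key move is to apply characterization (ii) of Proposition~\ref{prop: reformulations of rough} to $X$ with the vectors $x_1,\dots,x_n\in X$ (which are not necessarily unit, possibly even zero) and tolerance $\varepsilon$. This yields $x\in X$ with $\|x\|\leq\varepsilon$ and
\[
\frac{1}{n}\sum_{i=1}^{n}\bigl(\|x_i+x\|+\|x_i-x\|\bigr) > (\delta-\varepsilon)\|x\|+\frac{2}{n}\sum_{i=1}^{n}\|x_i\|.
\]
I then take $(x,0)\in X\oplus_1 Y$ as the perturbation. Using the definition of the $\ell_1$-norm, $\|(x_i,y_i)\pm (x,0)\|_1=\|x_i\pm x\|+\|y_i\|$, so the left-hand side in (iii) becomes
\[
\frac{1}{n}\sum_{i=1}^{n}\bigl(\|x_i+x\|+\|x_i-x\|\bigr)+\frac{2}{n}\sum_{i=1}^{n}\|y_i\|.
\]
Combining with the inequality above and using $\|x_i\|+\|y_i\|=\|(x_i,y_i)\|_1=1$, the right-hand side collapses to $(\delta-\varepsilon)\|x\|+2=(\delta-\varepsilon)\|(x,0)\|_1+2$, which is exactly what (iii) requires (since $\|(x,0)\|_1=\|x\|\leq\varepsilon$).

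There is essentially no obstacle: the $\ell_1$-norm decouples the two coordinates additively, so the $\|y_i\|$ contributions on the left precisely absorb the $\frac{2}{n}\sum\|x_i\|$ on the right via the normalization $\|x_i\|+\|y_i\|=1$. The only subtle point worth noting in the write-up is that (ii) must be invoked for arbitrary (possibly zero or non-unit) vectors $x_i\in X$, rather than the unit-sphere version (iii); this is harmless because condition (ii) is part of the equivalence already established. The symmetric case where $Y$ is $\delta$-average rough is identical with perturbations of the form $(0,y)$.
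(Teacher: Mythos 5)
Your proposal is correct and follows essentially the same route as the paper: apply formulation (ii) of Proposition~\ref{prop: reformulations of rough} to the (possibly non-unit) first coordinates $x_1,\dots,x_n$, take the perturbation $(x,0)$, and use the additivity of the $\ell_1$-norm together with $\|x_i\|+\|y_i\|=1$ to absorb the $\frac2n\sum\|x_i\|$ and $\frac2n\sum\|y_i\|$ terms into the constant $2$. No gaps; the remark you make about invoking (ii) rather than (iii) for the factor is exactly the point the paper's proof also relies on.
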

\begin{proof}
We consider only the case where $X$ is $\delta$-average rough. The case where $Y$ is $\delta$-average rough is similar. We will prove that $Z=X\oplus_1 Y$ is $\delta$-average rough. Let $z_{1}=(x_{1},y_{1}),\dotsc,z_{n}=(x_{n},y_{n})\in S_{Z}$ and $\varepsilon>0$. By Proposition~\ref{prop: reformulations of rough}, it suffices to show that there exists $z=(x,y)\in Z$ such that $\Vert z \Vert_1 =  \varepsilon$ and
	\begin{displaymath}
	\frac{1}{n}\sum_{i=1}^{n}\Big( \Vert z_{i}+z\Vert_1 + \Vert z_{i}-z\Vert_1 \Big) \geq (\delta-\varepsilon)\Vert z \Vert_1 +2.
	\end{displaymath}
	Since $X$ is $\delta$-average rough, there is an $x\in X$  such that $\Vert x \Vert = \varepsilon$ and
	\begin{displaymath}
	\sum_{i=1}^{n} \frac1n\Big( \Vert x_{i}+x\Vert + \Vert x_{i}-x\Vert \Big) \geq (\delta-\varepsilon)\Vert x \Vert +\frac2n\sum_{i=1}^{n} \Vert x_{i}\Vert.
	\end{displaymath}
It follows that, for $z=(x,0)$ we have $\|z\|_1=\|x\|=\varepsilon$, and 
\begin{align*}
&\frac{1}{n}\sum_{i=1}^{n}\Big( \Vert z_{i}+z\Vert_1 + \Vert z_{i}-z\Vert_1 \Big)\\
&=\frac{1}{n}\sum_{i=1}^{n}\Big( \Vert x_{i}+x\Vert+\|y_i\| + \Vert x_{i}-x\Vert +\|y_i\| \Big)\\
&\geq (\delta-\varepsilon)\Vert x \Vert +\frac2n\sum_{i=1}^{n} \Vert x_{i}\Vert+\frac2n\sum_{i=1}^{n} \Vert y_{i}\Vert\\
&=(\delta-\varepsilon)\Vert z \Vert_1+2.
\end{align*}
\end{proof}

\begin{corollary}[see {\cite[Proposition~3.12]{haller_duality_2015}}]
If $X$ or $Y$ is octahedral, then $X\oplus_1 Y$ is also octahedral.
\end{corollary}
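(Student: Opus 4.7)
The plan is to apply the preceding proposition with $\delta=2$ and invoke the known equivalence between octahedrality and $2$-average roughness. Recall from the introduction that a Banach space is octahedral if and only if it is $2$-average rough (a fact attributed to \cite{becerra_guerrero_octahedral_2014}, \cite{deville_dual_1988}, and \cite{godefroy_metric_1989}). So the hypothesis that $X$ or $Y$ is octahedral is just the statement that $X$ or $Y$ is $2$-average rough.

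Applying the proposition just proved to the value $\delta=2$, we conclude that $X\oplus_1 Y$ is $2$-average rough, and hence octahedral by the same equivalence. No further computation is needed; the corollary is a direct quantitative specialization.

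There is no real obstacle here; the only point to be careful about is that the equivalence "octahedral $\Leftrightarrow$ $2$-average rough" is quoted correctly so that the corollary is genuinely a consequence rather than needing a separate verification. Since this equivalence is explicitly recorded in the introduction of the paper, the proof reduces to a one-line application.
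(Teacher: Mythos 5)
Your proof is correct and is exactly the intended derivation: the paper presents this as an immediate corollary of the preceding proposition, obtained by taking $\delta=2$ and using the equivalence between octahedrality and $2$-average roughness stated in the introduction. Nothing further is needed.
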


 The following theorem is one of the main results in this section.
 
\begin{theorem}\label{thm: abs sum delta rough}
	Let $X$ and $Y$ be Banach spaces, $N$ an absolute normalized norm on $\R^2$, and $\gamma>0$ be such that $\Vert \cdot\Vert_{\infty} \geq \gamma N(\cdot)$. If $X$ and $Y$ are $\delta$-average rough for some $\delta>0$, then $X\oplus_N Y$ is $\gamma\delta$-average rough.
\end{theorem}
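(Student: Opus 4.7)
The plan is to verify condition (ii) of Proposition~\ref{prop: reformulations of rough} for $X\oplus_N Y$ by combining the $\delta$-roughness of $X$ and $Y$ via weights coming from norming functionals of the pairs $(\|x_i\|,\|y_i\|)$ for the two-dimensional norm $N$. The hypothesis $\|\cdot\|_\infty\geq\gamma N$ enters through its dual form $N^{\ast}\geq \gamma\|\cdot\|_1$ on $\R^2$.

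Write $z_i=(x_i,y_i)\in S_{X\oplus_N Y}$, $a_i=\|x_i\|$, $b_i=\|y_i\|$, so $N(a_i,b_i)=1$. For each $i$ pick $(\lambda_i,\mu_i)\in[0,\infty)^2$ with $N^{\ast}(\lambda_i,\mu_i)=1$ and $\lambda_i a_i+\mu_i b_i=1$. Since $N^{\ast}\leq\|\cdot\|_1$, one has $\lambda_i+\mu_i\geq 1$, so $\bar\lambda+\bar\mu\geq 1$ where $\bar\lambda=\tfrac{1}{n}\sum_i\lambda_i$ and $\bar\mu=\tfrac{1}{n}\sum_i\mu_i$. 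Dualizing $N\leq\gamma^{-1}\|\cdot\|_\infty$ gives $N^{\ast}\geq\gamma\|\cdot\|_1$, and therefore
\[
N^{\ast}(\bar\lambda,\bar\mu)\geq\gamma(\bar\lambda+\bar\mu)\geq\gamma.
\]
Choose $(\alpha,\beta)\in[0,\infty)^2$ with $N(\alpha,\beta)=1$ attaining $\bar\lambda\alpha+\bar\mu\beta=N^{\ast}(\bar\lambda,\bar\mu)$.

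For small $\eta>0$, the weighted form of Proposition~\ref{prop: reformulations of rough}(ii) noted in the remark following it, applied to $(x_i)$ in $X$ with weights $\lambda_i/(n\bar\lambda)$ and to $(y_i)$ in $Y$ with weights $\mu_i/(n\bar\mu)$, produces $u\in X$ with $\|u\|=\alpha\eta$ and $v\in Y$ with $\|v\|=\beta\eta$ satisfying
\[
\tfrac{1}{n}\sum_i\lambda_i\bigl(\|x_i+u\|+\|x_i-u\|\bigr)>(\delta-\alpha\eta)\bar\lambda\|u\|+\tfrac{2}{n}\sum_i\lambda_i a_i
\]
together with the analogous inequality for $(y_i,v,\mu_i)$. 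Set $z=(u,v)$, so that $\|z\|_N=N(\alpha\eta,\beta\eta)=\eta$. Since $N^{\ast}(\lambda_i,\mu_i)=1$, one has $N(p,q)\geq\lambda_i p+\mu_i q$ for every $p,q\geq 0$, and hence $\|z_i\pm z\|_N\geq\lambda_i\|x_i\pm u\|+\mu_i\|y_i\pm v\|$. Adding the $+$ and $-$ inequalities, averaging over $i$, substituting the two weighted roughness bounds, and using $\lambda_i a_i+\mu_i b_i=1$ yield
\[
\tfrac{1}{n}\sum_i\bigl(\|z_i+z\|_N+\|z_i-z\|_N\bigr)>2+(\delta-\alpha\eta)\bar\lambda\|u\|+(\delta-\beta\eta)\bar\mu\|v\|.
\]
Since $\bar\lambda\|u\|+\bar\mu\|v\|=\eta(\bar\lambda\alpha+\bar\mu\beta)=\eta N^{\ast}(\bar\lambda,\bar\mu)\geq\gamma\eta=\gamma\|z\|_N$, the right-hand side dominates $2+(\gamma\delta-\varepsilon)\|z\|_N$ for $\eta$ sufficiently small, which proves by Proposition~\ref{prop: reformulations of rough} that $X\oplus_N Y$ is $\gamma\delta$-average rough.

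The principal difficulty is the duality estimate $N^{\ast}(\bar\lambda,\bar\mu)\geq\gamma$, which channels $\gamma$ from the hypothesis on $N$ into the final constant; the remaining steps are routine. Degenerate indices where some $\lambda_i$ or $\mu_i$ vanishes, and the extreme cases $\bar\lambda=0$ or $\bar\mu=0$ (equivalently $\alpha=0$ or $\beta=0$), are handled by restricting the weighted proposition to the indices with positive weights and setting $u=0$ or $v=0$ as appropriate.
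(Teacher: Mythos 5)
Your proof is correct and follows essentially the same route as the paper: norming functionals $(\lambda_i,\mu_i)$ for $N$ at $(\|x_i\|,\|y_i\|)$, weighted average roughness in each factor, and recombination via $N(p,q)\geq\lambda_i p+\mu_i q$, with the same degenerate-case split. The only difference is cosmetic: you channel $\gamma$ through the dual inequality $N^{\ast}\geq\gamma\|\cdot\|_1$ and an optimized direction $(\alpha,\beta)$, whereas the paper takes $\|u\|=\|v\|=\varepsilon$ and applies $\|\cdot\|_{\infty}\geq\gamma N$ directly --- dual formulations of the same estimate.
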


\begin{proof}
	Assume that $X$ and $Y$ are $\delta$-average rough. We will prove that $Z=X\oplus_N Y$ is $\gamma\delta$-average rough. Let $z_{1}=(x_{1},y_{1}),\dotsc,z_{n}=(x_{n},y_{n})\in S_{Z}$ and $\varepsilon>0$. By Proposition~\ref{prop: reformulations of rough}, it suffices to show that there exists $z=(x,y)\in Z$ such that $\Vert z \Vert_N = \varepsilon N(1,1)$ and
	\begin{displaymath}
	\frac{1}{n}\sum_{i=1}^{n}\Big( \Vert z_{i}+z\Vert_N + \Vert z_{i}-z\Vert_N \Big) \geq (\delta-\varepsilon)\gamma\Vert z \Vert_N +2.
	\end{displaymath}
	Choose $c_i, d_i\geq 0$ such that $N^\ast(c_{i},d_{i})=1$ and $c_{i}\Vert x_{i}\Vert+d_{i}\Vert y_{i}\Vert=1$. Denote by 
	\begin{displaymath}
	c= \frac{1}{n}\sum_{i=1}^{n}c_{i} \qquad\text{and}\qquad d= \frac{1}{n}\sum_{i=1}^{n}d_{i}.
	\end{displaymath}
  Note that $c+d\geq 1$, because $c_i+d_i\geq N^\ast(c_i,d_i)=1$.  Consider first the case where $c\neq 0$ and $d\neq 0$. Denote by
	\begin{displaymath}
	\mu_{i}=\frac{1}{n} \frac{c_{i}}{c} \qquad\text{and}\qquad \nu_{i}=\frac{1}{n} \frac{d_{i}}{d}.
	\end{displaymath}
	Observe that $\mu_{1}+\dotsb+\mu_{n}=\nu_{1}+\dotsb+\nu_{n}=1$. Since $X$ and $Y$ are $\delta$-average rough, by Proposition~\ref{prop: reformulations of rough}, there are $x\in X$ and $y\in Y$ such that $\Vert x \Vert= \Vert y \Vert = \varepsilon$ and
	\begin{displaymath}
	\sum_{i=1}^{n} \mu_{i}\Big( \Vert x_{i}+x\Vert + \Vert x_{i}-x\Vert \Big) \geq (\delta-\varepsilon)\Vert x \Vert +2\sum_{i=1}^{n} \mu_{i}\Vert x_{i}\Vert
	\end{displaymath}
	and
	\begin{displaymath}
	\sum_{i=1}^{n} \nu_{i}\Big( \Vert y_{i}+y\Vert + \Vert y_{i}-y\Vert \Big) \geq (\delta-\varepsilon)\Vert y \Vert +2\sum_{i=1}^{n} \nu_{i}\Vert y_{i}\Vert.
	\end{displaymath}
It follows that, for $z=(x,y)$ we have $\|z\|_N=\varepsilon N(1,1)$, and
	\begin{align*}
		&\frac{1}{n} \sum_{i=1}^{n} \Big(  \Vert z_{i}+z\Vert_N + \Vert z_{i}-z\Vert_N \Big) \\
		&\qquad \geq \frac{1}{n}\sum_{i=1}^{n} N \big( \Vert x_{i}+x\Vert +\Vert x_{i}-x\Vert,\Vert y_{i}+y\Vert +\Vert y_{i}-y\Vert \big) \\
		&\qquad \geq \frac{1}{n} \sum_{i=1}^{n} \Big( c_{i} \big(\Vert x_{i}+x\Vert +\Vert x_{i}-x\Vert\big)+d_{i} \big(\Vert y_{i}+y\Vert +\Vert y_{i}-y\Vert\big)\Big) \\
		&\qquad = c \sum_{i=1}^{n} \mu_{i} \big(\Vert x_{i}+x\Vert +\Vert x_{i}-x\Vert\big)+d \sum_{i=1}^{n} \nu_{i} \big(\Vert y_{i}+y\Vert +\Vert y_{i}-y\Vert\big)\\
		&\qquad \geq c \Big((\delta-\varepsilon)\Vert x \Vert +2\sum_{i=1}^{n} \mu_{i}\Vert x_{i}\Vert\Big)+d\Big( (\delta-\varepsilon)\Vert y \Vert +2\sum_{i=1}^{n} \nu_{i}\Vert y_{i}\Vert\Big)\\
		&\qquad = (\delta-\varepsilon)(c\Vert x \Vert +d\Vert y \Vert)+\frac{2}{n}\sum_{i=1}^{n} (c_{i}\Vert x_{i}\Vert + d_{i}\Vert y_{i}\Vert)\\
		&\qquad = (\delta-\varepsilon)(c+d)\max\{\|x\|, \|y\|\} +2 \\
		&\qquad \geq (\delta-\varepsilon) \gamma N(\Vert x \Vert,\Vert y \Vert)+2\\
		&\qquad =(\delta-\varepsilon)\gamma\Vert z \Vert_{N} +2.
	\end{align*}
	
	Consider now the case where $c=0$, which means that $c_{i}=0$ and $d_i=1$ for all $i\in\{1,\dotsc,n\}$. This implies that $\|y_i\|=1$ for all $i\in\{1,\dotsc,n\}$. Since $Y$ is $\delta$-average rough, by Proposition~\ref{prop: reformulations of rough}, there exists a $y\in Y$ such that $\|y\|=\varepsilon N(1,1)$ and  
	\begin{displaymath}
	\sum_{i=1}^{n} \frac1n\Big( \Vert y_{i}+y\Vert + \Vert y_{i}-y\Vert \Big) \geq (\delta-\varepsilon)\Vert y \Vert +2.
	\end{displaymath}
Therefore, for $z=(0,y)$ we have $\|z\|_N=\|y\|=\varepsilon N(1,1)$, and
	\begin{align*}
	&\frac{1}{n} \sum_{i=1}^{n} \Big(  \Vert z_{i}+z\Vert_N + \Vert z_{i}-z\Vert_N \Big) \\
	&\qquad \geq \frac{1}{n} \sum_{i=1}^{n}  \Big(\Vert y_{i}+y\Vert +\Vert y_{i}-y\Vert\Big) \\
	&\qquad \geq (\delta-\varepsilon)\Vert y \Vert +2 \\
	&\qquad \geq(\delta-\varepsilon)\gamma\Vert z \Vert_{N} +2.
	\end{align*}
	
	The case where $d=0$ is similar to the case $c=0$.	We have thus proved that $X\oplus_{N}Y$ is $\gamma\delta$-average rough.
\end{proof}

In particular, Theorem~\ref{thm: abs sum delta rough} applies to $\ell_p$-norms.

\begin{corollary}
If Banach spaces $X$ and $Y$ are $\delta$-average rough for some $\delta>0$, then
	\begin{itemize}
		\item[\rm (a)]  $X\oplus_{\infty}Y$ is $\delta$-average rough;
		\item[\rm (b)] $X\oplus_{p}Y$ is $2^{-1/p}\delta$-average rough for $1<p<\infty$.
	\end{itemize}
\end{corollary}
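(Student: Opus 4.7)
The plan is to read off both statements as direct specializations of Theorem~\ref{thm: abs sum delta rough}, so the task reduces to computing, for each relevant absolute normalized norm $N$ on $\R^2$, the largest constant $\gamma>0$ for which the inequality $\|\cdot\|_\infty \geq \gamma\, N(\cdot)$ holds on all of $\R^2$, and then substituting that $\gamma$ into the conclusion of the theorem.

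For part (a), I would take $N=\|\cdot\|_\infty$. Then trivially $\|\cdot\|_\infty \geq 1\cdot\|\cdot\|_\infty$, so the hypothesis of Theorem~\ref{thm: abs sum delta rough} is satisfied with $\gamma=1$, and the theorem immediately yields that $X\oplus_\infty Y$ is $\delta$-average rough.

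For part (b), I would take $N=\|\cdot\|_p$ with $1<p<\infty$ and check that the sharp constant is $\gamma=2^{-1/p}$. On one hand, for every $(a,b)\in\R^2$ one has
\[
\|(a,b)\|_p = \bigl(|a|^p+|b|^p\bigr)^{1/p} \leq \bigl(2\max\{|a|,|b|\}^p\bigr)^{1/p} = 2^{1/p}\,\|(a,b)\|_\infty,
\]
so $\|\cdot\|_\infty \geq 2^{-1/p}\,\|\cdot\|_p$, and this is tight (consider $a=b=1$). Feeding $\gamma=2^{-1/p}$ into Theorem~\ref{thm: abs sum delta rough} gives that $X\oplus_p Y$ is $2^{-1/p}\delta$-average rough, which is exactly the claimed conclusion.

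There is no real obstacle here: both items are one-line consequences of Theorem~\ref{thm: abs sum delta rough} once the comparison constants between $\|\cdot\|_\infty$ and $\|\cdot\|_p$ are identified. The only point that needs a brief verbal remark is why these numerical constants are natural (and, in view of Proposition~\ref{prop: l1 p-sum is not average rough} referenced later, why the value $2^{1-1/p}$ appearing in the octahedral case $\delta=2$ is in fact optimal), but that sharpness is not part of the statement to be proved here.
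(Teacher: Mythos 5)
Your proposal is correct and matches the paper exactly: the corollary is stated as an immediate specialization of Theorem~\ref{thm: abs sum delta rough}, and your computation of the sharp constants $\gamma=1$ for $\|\cdot\|_\infty$ and $\gamma=2^{-1/p}$ for $\|\cdot\|_p$ is precisely what is needed. Nothing is missing.
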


\begin{corollary}\label{cor: p-sum of OH spaces is 2^{1/q}-average rough}
If Banach spaces $X$ and $Y$ are octahedral and $1<p<\infty$, then $X\oplus_{p}Y$ is $2^{1-1/p}$-average rough.
\end{corollary}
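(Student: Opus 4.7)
The plan is to deduce this as an immediate specialization of Theorem~\ref{thm: abs sum delta rough}. First, recall that octahedrality is equivalent to $2$-average roughness (stated in the introduction), so the hypothesis means $X$ and $Y$ are $\delta$-average rough with $\delta=2$. It remains to identify the optimal constant $\gamma>0$ such that $\|\cdot\|_\infty \geq \gamma\,\|\cdot\|_p$ holds on $\R^2$.

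For $(a,b)\in\R^2$ with $1<p<\infty$, the elementary inequality
\[
\|(a,b)\|_p = \bigl(|a|^p+|b|^p\bigr)^{1/p} \leq \bigl(2\max\{|a|,|b|\}^p\bigr)^{1/p} = 2^{1/p}\|(a,b)\|_\infty
\]
gives $\|(a,b)\|_\infty \geq 2^{-1/p}\|(a,b)\|_p$, with equality when $|a|=|b|$. Thus we may take $\gamma = 2^{-1/p}$ in Theorem~\ref{thm: abs sum delta rough}.

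Applying that theorem with $N = \|\cdot\|_p$ and $\delta = 2$, we conclude that $X\oplus_p Y$ is $\gamma\delta$-average rough with
\[
\gamma\delta = 2^{-1/p}\cdot 2 = 2^{1-1/p},
\]
which is exactly the desired claim. There is no real obstacle here; the whole content is packaged in Theorem~\ref{thm: abs sum delta rough}, and the only thing to check is the sharp constant in the comparison between the $\ell_\infty$- and $\ell_p$-norms on $\R^2$.
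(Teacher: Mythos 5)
Your proposal is correct and matches the paper's route exactly: the paper also obtains this as an immediate consequence of Theorem~\ref{thm: abs sum delta rough} (via the intermediate corollary for $\ell_p$-sums), using that octahedrality is $2$-average roughness and that $\gamma=2^{-1/p}$ is the sharp constant with $\|\cdot\|_\infty\geq\gamma\|\cdot\|_p$ on $\R^2$. Nothing is missing.
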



In Corollary~\ref{cor: p-sum of OH spaces is 2^{1/q}-average rough}, we saw that if $X$ and $Y$ are octahedral and $1<p<\infty$, then $X\oplus_p Y$ is $2^{1-1/p}$-average rough. We will now prove that in general $2^{1-1/p}$ is the largest possible number.

\begin{proposition}\label{prop: l1 p-sum is not average rough}
Let $X$ and $Y$ be Banach spaces and $1<p<\infty$. Then $X\oplus_{p}Y$ is not $\delta$-average rough for any $\delta>2^{1-1/p}$.
\end{proposition}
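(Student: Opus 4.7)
The plan is to witness the failure of $\delta$-average roughness for any $\delta > 2^{1-1/p}$ by exhibiting a specific choice of unit vectors in $Z = X \oplus_p Y$ for which the average roughness limsup is at most $2^{1-1/p}$. Concretely, take $n = 2$ and the unit vectors $z_1 = (x_1, 0)$, $z_2 = (0, y_1)$ for any $x_1 \in S_X$ and $y_1 \in S_Y$. For a perturbation $h = (h_1, h_2) \in Z$, write $s = \|h_1\|$ and $t = \|h_2\|$, so that $\|h\|_p = (s^p + t^p)^{1/p}$; the goal is to bound
$$\frac{1}{2}\sum_{i=1}^{2} \frac{\|z_i + h\|_p + \|z_i - h\|_p - 2}{\|h\|_p}$$
uniformly as $\|h\|_p \to 0$.

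First, the triangle inequality in $X$ gives $\|x_1 \pm h_1\| \leq 1 + s$, and monotonicity of the $\ell_p$-norm on $\R^2$ then yields $\|z_1 \pm h\|_p \leq ((1+s)^p + t^p)^{1/p}$, with the symmetric bound $\|z_2 \pm h\|_p \leq (s^p + (1+t)^p)^{1/p}$. The crucial next step sharpens these using the concavity of $r \mapsto r^{1/p}$ for $p > 1$; its tangent inequality at $r_0 = (1+s)^p$ gives
$$((1+s)^p + t^p)^{1/p} \leq (1+s) + \frac{t^p}{p(1+s)^{p-1}} \leq (1+s) + \frac{t^p}{p},$$
with a symmetric estimate for the $z_2$-term. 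Summing all four and subtracting $4$ bounds the numerator of the averaged ratio by $2(s+t) + 2(s^p + t^p)/p$.

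Dividing by $2\|h\|_p$ and invoking the power-mean inequality $s + t \leq 2^{1-1/p}(s^p + t^p)^{1/p}$ on $\R^2$, the averaged roughness ratio is bounded by
$$\frac{s + t}{(s^p + t^p)^{1/p}} + \frac{(s^p + t^p)^{1-1/p}}{p} \leq 2^{1-1/p} + \frac{\|h\|_p^{p-1}}{p},$$
whose limit as $\|h\|_p \to 0$ is exactly $2^{1-1/p}$, which is strictly less than $\delta$; this completes the argument.

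The main obstacle is the tightness of the upper bound: the plain triangle inequality $\|z_i \pm h\|_p \leq 1 + \|h\|_p$ by itself only yields a limsup bound of $2$, indistinguishable from the $p = 1$ case. The concavity of $r \mapsto r^{1/p}$ is what converts the linear cross term $t$ in $((1+s)^p + t^p)^{1/p}$ into a higher-order $t^p/p$, producing the sharp constant $2^{1-1/p}$, with equality in the power-mean bound approached along the diagonal direction $s = t$.
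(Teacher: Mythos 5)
Your proof is correct, and it follows the same overall strategy as the paper: the same witness pair $z_1=(x_1,0)$, $z_2=(0,y_1)$, an upper bound on the averaged roughness quotient of the form $2^{1-1/p}+o(1)$ as $\|h\|_p\to 0$, and the same H\"older/power-mean inequality $s+t\le 2^{1-1/p}(s^p+t^p)^{1/p}$ to produce the sharp constant. Where you genuinely diverge is in the central estimate $\bigl((1+s)^p+t^p\bigr)^{1/p}\le (1+s)+t^p/p$: the paper gets there by a Maclaurin expansion of $(1+s)^p$ with a Lagrange remainder followed by the generalized Bernoulli inequality, which forces a case split $1<p\le 2$ versus $p>2$ to control the factor $(1+\xi)^{p-2}$ and leaves a two-term error function $f(\varepsilon)$. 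Your tangent-line (concavity) inequality for $r\mapsto r^{1/p}$ at $r_0=(1+s)^p$ delivers the same bound in one line, with no case analysis, no second-order term, and the cleaner error $\|h\|_p^{p-1}/p$. This is a tidier route through the same computation; the only thing the paper's longer calculation buys is that it stays entirely at the level of elementary inequalities for powers of $(1+s)$, whereas yours invokes concavity of the $p$-th root, which is an equally standard but slightly higher-level tool.
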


\begin{proof}
We will prove that $Z=X\oplus_{p}Y$ is not $\delta$-average rough for any $\delta>2^{1-1/p}$.
	Consider the elements $z_{1}=(x_0,0)$ and $z_{2}=(0,y_0)$ in $Z$, where $x_0\in S_X$ and $y_0\in S_Y$. It suffices to show that there is a function $f:(0,\infty)\rightarrow \mathbb{R}$ such that $f(\varepsilon)\rightarrow 0$, when $\varepsilon\rightarrow 0$, and that for every $\varepsilon>0$ and $z\in Z$, where $\Vert z \Vert =\varepsilon$, 
	\begin{displaymath}
	\frac{1}{2}\Big( \Vert z_{1}+z\Vert_{p}+\Vert z_{1}-z\Vert_{p}+\Vert z_{2}+z\Vert_{p}+\Vert z_{2}-z\Vert_{p}\Big)  \leq \Big( 2^{1-1/p}+f(\varepsilon)\Big) \Vert z \Vert_{p} +2.
	\end{displaymath}
	Let $\varepsilon\in (0,1)$. Let $z=(x,y)\in Z$ be such that $\Vert z \Vert_{p} =\varepsilon$. 
	By Maclaurin's formula,
	\begin{displaymath}
	(1+\Vert x \Vert)^{p}=1+p\Vert x \Vert+\frac{p(p-1)}{2}(1+\xi)^{p-2}\Vert x \Vert^{2},
	\end{displaymath}
	for some $\xi \in (0,\Vert x \Vert)$.
	Observe that
	\begin{equation}\label{eq: general estimate}
	\begin{aligned}
	\Vert z_{1}\pm z\Vert_{p}^{p}&=\Vert x_0\pm x \Vert^{p}+\Vert y \Vert^{p} \leq ( 1+\Vert x \Vert)^{p}+\Vert y \Vert^{p}\\
	&= 1+p\Vert x \Vert +\frac{p(p-1)(1+\xi)^{p-2}}{2}\Vert x \Vert^{2}+\Vert y \Vert^{p}.
	\end{aligned}
	\end{equation}
	We continue by considering the cases $1<p\leq 2$ and $p>2$ separately. In both cases we will use the generalized Bernoulli's inequality, which says that for any $t\geq 0$ we have $	(1+t)^{1/p}\leq 1+t/p$. 
	
\textbf{Case I}. Assume that $1<p\leq 2$.
Since $\xi \in (0,\Vert x \Vert)$, we have
	\begin{displaymath}
	(1+\xi)^{p-2} \leq (1+0)^{p-2} = 1.
	\end{displaymath}
Combining the estimate (\ref{eq: general estimate}) with Bernoulli's inequality we get
	\begin{displaymath}
	\begin{aligned}
	\Vert z_{1}\pm z\Vert_{p}&\leq \Big(1+p\Vert x \Vert +\frac{p(p-1)}{2}\Vert x \Vert^{2}+\Vert y \Vert^{p}\Big)^{1/p}\\
	&\leq 1+\Vert x \Vert +\frac{p-1}{2}\Vert x \Vert^{2}+\frac{\Vert y \Vert^{p}}{p}.
	\end{aligned}
	\end{displaymath}
	Similarly, we obtain
	\begin{displaymath}
	\begin{aligned}
	\Vert z_{2}\pm z\Vert_{p}&\leq 1+\frac{\Vert x \Vert^{p}}{p} +\frac{p-1}{2}\Vert y \Vert^{2}+\Vert y \Vert.
	\end{aligned}
	\end{displaymath}
Therefore
	\begin{displaymath}
	\begin{aligned}
	&\frac{1}{2}\Big(\Vert z_{1}+z\Vert_{p} + \Vert z_{1}-z\Vert_{p}+ \Vert z_{2}+z\Vert_{p} + \Vert z_{2}-z\Vert_{p}\Big)\\
	&\leq  \bigg( 1+\Vert x \Vert +\frac{p-1}{2}\Vert x \Vert^{2}+\frac{\Vert y \Vert^{p}}{p}\bigg)+\bigg(1+\frac{\Vert x \Vert^{p}}{p} +\frac{p-1}{2}\Vert y \Vert^{2}+\Vert y \Vert\bigg)\\
	& = 2+\Vert x \Vert +\Vert y \Vert+\frac{p-1}{2}(\Vert x\Vert ^{2}+\Vert y\Vert ^{2} )+\frac{1}{p} (\Vert x \Vert^{p}+\Vert y \Vert^{p})\\
	&\leq 2+2^{1-1/p}\Vert (x,y)\Vert_{p}+\frac{p-1}{2}\varepsilon^{2}+\frac{\varepsilon^{p}}{p}\\
	&= 2+\left( 2^{1-1/p}+\frac{p-1}{2}\varepsilon+\frac{\varepsilon^{p-1}}{p}\right)  \Vert z \Vert_p.
	\end{aligned}
	\end{displaymath}
	Thus, for $1< p \leq2$, we can take  \begin{displaymath}
	f(\varepsilon)= \frac{p-1}{2}\varepsilon+\frac{\varepsilon^{p-1}}{p}.
	\end{displaymath}
	
	\textbf{Case II}. Assume that $p>2$.
Since $\xi \in (0,\Vert x \Vert)$ and $\Vert x \Vert \leq \varepsilon<1$, we have
	\begin{displaymath}
	(1+\xi)^{p-2}\leq(1+\Vert x \Vert)^{p-2} \leq (1+\varepsilon)^{p-2} < 2^{p-2}.
	\end{displaymath}
Combining this estimate with (\ref{eq: general estimate}) and Bernoulli's inequality, we get
	\begin{displaymath}
	\begin{aligned}
	\Vert z_{1}\pm z\Vert_{p}&\leq \big(1+p\Vert x \Vert +p(p-1)2^{p-3}\Vert x \Vert^{2}+\Vert y \Vert^{p}\big)^{1/p}\\
	&\leq 1+\Vert x \Vert +(p-1)2^{p-3}\Vert x \Vert^{2}+\frac{\Vert y \Vert^{p}}{p}.
	\end{aligned}
	\end{displaymath}
	Similarly, we obtain
	\begin{displaymath}
	\begin{aligned}
	\Vert z_{2}\pm z\Vert_{p}&\leq (\Vert x \Vert^{p}+1+p\Vert y \Vert +p(p-1)2^{p-3}\varepsilon^{2})^{1/p}\\
	&\leq 1+\frac{\Vert x \Vert^{p}}{p} +(p-1)2^{p-3}\Vert y \Vert^{2}+\Vert y \Vert.
	\end{aligned}
	\end{displaymath}
Therefore
	\begin{displaymath}
	\begin{aligned}
	&\frac{1}{2}\Big(\Vert z_{1}+z\Vert_{p} + \Vert z_{1}-z\Vert_{p}+ \Vert z_{2}+z\Vert_{p} + \Vert z_{2}-z\Vert_{p}\Big)\\
	&\leq \bigg( 1+\Vert x \Vert +(p-1)2^{p-3}\varepsilon^{2}+\frac{\Vert y \Vert^{p}}{p}\bigg)\\
	&\qquad +\bigg(1+\frac{\Vert x \Vert^{p}}{p} +(p-1)2^{p-3}\varepsilon^{2}+\Vert y \Vert\bigg)\\
	& =2+ (\Vert x \Vert+\Vert y \Vert)+(p-1)2^{p-3}(\Vert x\Vert ^{2}+\Vert y\Vert ^{2} )+\frac{1}{p}(\Vert x \Vert^{p}+\Vert y \Vert^{p}) \\
	&\leq 2+2^{1-1/p}\Vert (x,y)\Vert_{p}+(p-1)2^{p-3}2^{1-2/p}\varepsilon^{2} +\frac{\varepsilon^{p}}{p}\\
	&=\left( 2^{1-1/p}+(p-1)2^{p-2/p-2}\varepsilon+\frac{\varepsilon^{p-1}}{p}\right) \Vert z \Vert+2.
	\end{aligned}
	\end{displaymath}
	Thus, for $p>2$, we can take 
	\begin{displaymath}
	f(\varepsilon)=(p-1)2^{p-2/p-2}\varepsilon+\frac{\varepsilon^{p-1}}{p}.
	\end{displaymath}
	
Hence  $X\oplus_{p}Y$ is not $\delta$-average rough for any $\delta>2^{1-1/p}$.
\end{proof}

Now we are ready to show that for any $\delta\in(1,2]$ there is a Banach space which is exactly $\delta$-average rough.

\begin{theorem}\label{thm: exactly delta-average rough}
	For any $\delta\in (1,2]$ there is a dual Banach space, which is $\delta$-average rough and is not $\gamma$-average rough for any $\gamma>\delta$.
\end{theorem}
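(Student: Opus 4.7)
My plan is to realize the required space as an $\ell_p$-sum of two copies of $\ell_1 = c_0^\ast$, calibrating $p$ to the prescribed value of $\delta$. Concretely, given $\delta \in (1,2)$, I solve $2^{1-1/p} = \delta$ for $p \in (1,\infty)$, namely $p = 1/(1-\log_2 \delta)$, and set $Z := \ell_1 \oplus_p \ell_1$; for the boundary case $\delta = 2$, I simply take $Z := \ell_1$.

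First I will verify that $Z$ is a dual Banach space. Since $\ell_1 = c_0^\ast$ and, as recalled in the introduction, $(X \oplus_N Y)^\ast = X^\ast \oplus_{N^\ast} Y^\ast$, combined with the standard duality $(\ell_q)^\ast = \ell_p$ on $\R^2$, I obtain $Z = (c_0 \oplus_q c_0)^\ast$, where $q$ is the H\"older conjugate of $p$; the case $\delta = 2$ is immediate. Next, recalling that $\ell_1$ is octahedral---a classical fact, provable by choosing test elements of the form $y = e_k$ for $k$ beyond the essential support of a given finite-dimensional subspace---Corollary~\ref{cor: p-sum of OH spaces is 2^{1/q}-average rough} applied with $X = Y = \ell_1$ yields that $Z$ is $2^{1-1/p} = \delta$-average rough when $\delta < 2$; for $\delta = 2$, this reduces to the octahedrality of $\ell_1$ itself.

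The final step is to show that $Z$ cannot be $\gamma$-average rough for any $\gamma > \delta$. When $\delta < 2$ this is a direct application of Proposition~\ref{prop: l1 p-sum is not average rough}. For the endpoint $\delta = 2$ I invoke the elementary observation that no Banach space is $\gamma$-average rough for $\gamma > 2$: indeed, the triangle inequality gives $\Vert x+y\Vert + \Vert x-y\Vert - 2 \leq 2\Vert y\Vert$ whenever $x \in S_X$ and $y \in X$, so the relevant limsup is at most $2$.

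There is essentially no substantive obstacle here: the two technical ingredients---Corollary~\ref{cor: p-sum of OH spaces is 2^{1/q}-average rough} and Proposition~\ref{prop: l1 p-sum is not average rough}---already do the heavy lifting. The only bookkeeping is solving for $p$ correctly, isolating the endpoint $\delta = 2$ separately from the interior case, and spelling out why the constructed $Z$ is a dual in the specified form; all three are routine.
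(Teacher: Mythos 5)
Your proposal is correct and follows essentially the same route as the paper: both take $\ell_1$ for $\delta=2$ and $\ell_1\oplus_p\ell_1$ with $2^{1-1/p}=\delta$ for $\delta\in(1,2)$, then combine Corollary~\ref{cor: p-sum of OH spaces is 2^{1/q}-average rough} with Proposition~\ref{prop: l1 p-sum is not average rough}. Your added remarks on why $Z$ is a dual space and why no space is $\gamma$-average rough for $\gamma>2$ are correct details the paper leaves implicit.
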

\begin{proof}
	If $\delta=2$, then we can take $\ell_{1}$. If $\delta\in (1,2)$, then there is a $q\in (1,\infty)$ such that $\delta=2^{1/q}$. Let $p\in (1,\infty)$ be such that $1/p+1/q=1$.
	Since $\ell_{1}$ is octahedral, then by Corollary~\ref{cor: p-sum of OH spaces is 2^{1/q}-average rough} and Proposition~\ref{prop: l1 p-sum is not average rough} the Banach space $\ell_{1}\oplus_{p}\ell_{1}$ is $\delta$-average rough and is not $\gamma$-average rough for any $\gamma>\delta$.
\end{proof}

\begin{remark*}
We do not know whether a similar result to Theorem~\ref{thm: exactly delta-average rough} holds for $\delta\in (0,1]$.
\end{remark*}

Theorem~\ref{thm: exactly delta-average rough} and the dual caharacterization of $\delta$-average rough norms (see \cite[Theorem~2]{deville_dual_1988}) immediately implies the following.

\begin{corollary}
For any $\delta\in (1,2]$ there is a Banach space in which the minimal diameter of convex combination of slices is $\delta$.
\end{corollary}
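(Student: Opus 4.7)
The plan is to piece together Theorem~\ref{thm: exactly delta-average rough} with the dual characterization of $\delta$-average roughness recalled from \cite[Theorem~2]{deville_dual_1988}, which states that for a Banach space $X$, the dual $X^\ast$ is $\delta$-average rough if and only if every convex combination of slices of $B_X$ has diameter at least $\delta$. The corollary is essentially a direct translation from the ``roughness'' side of the duality to the ``diameter of convex combinations of slices'' side, so no new estimate is required.

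Concretely, given $\delta\in(1,2]$, I would apply Theorem~\ref{thm: exactly delta-average rough} to obtain a Banach space $X$ whose dual $X^\ast$ is $\delta$-average rough but fails to be $\gamma$-average rough for any $\gamma>\delta$. Let
\[
d=\inf\bigl\{\diam C : C \text{ is a convex combination of slices of } B_X\bigr\}.
\]
The dual characterization applied in one direction ($X^\ast$ is $\delta$-average rough) yields $d\geq\delta$. Applied in the contrapositive direction (for each $\gamma>\delta$ the space $X^\ast$ is not $\gamma$-average rough), it produces, for every $\gamma>\delta$, a convex combination of slices of $B_X$ whose diameter is strictly less than $\gamma$, hence $d\leq\gamma$. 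Letting $\gamma\downarrow\delta$ gives $d\leq\delta$, so $d=\delta$, which is the required minimal diameter.

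There is no real obstacle here: the only thing to be slightly careful about is the interpretation of ``minimal'' as the infimum, since the equivalence in \cite[Theorem~2]{deville_dual_1988} characterises when \emph{all} convex combinations of slices exceed a given value, and does not by itself assert that the infimum is attained. Once one reads ``minimal diameter'' as this infimum, the argument above produces exactly $\delta$, and the proof is complete in two lines after invoking Theorem~\ref{thm: exactly delta-average rough}.
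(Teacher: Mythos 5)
Your proposal is correct and is exactly the argument the paper intends: the paper offers no written proof beyond saying the corollary follows immediately from Theorem~\ref{thm: exactly delta-average rough} together with Deville's dual characterization, and your two-directional application of that equivalence (lower bound from $\delta$-average roughness, upper bound from the failure of $\gamma$-average roughness for every $\gamma>\delta$) is the natural way to spell this out. Your remark that ``minimal'' must be read as the infimum, since the characterization does not guarantee attainment, is a sensible and accurate clarification.
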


We end this section by describing when the $\delta$-average roughness passes down from the absolute sum to one of the factors. Our results are inspired by \cite[Proposition~2.5]{acosta_stability_2015}.

The following lemma is easily verified from the definitions.

\begin{lemma}\label{lemma: extreme point is an strongly exposed point}
	Let $N$ be an absolute normalized norm on $\mathbb{R}^{2}$ such that $(1,0)$ is an extreme point of the unit ball $B_{(\mathbb{R}^2, N)}$. Then $(1,0)$ is a strongly exposed point of $B_{(\mathbb{R}^2, N)}$, which is strongly exposed by the functional $(1,0)\in B_{(\mathbb{R}^2, N^\ast)} $. In particular, for every $\varepsilon>0$ there is a $\gamma>0$ such that, whenever $(a,b)\in B_{(\mathbb{R}^2, N)}$ and $a>1-\gamma$, then $|b|<\varepsilon$.
\end{lemma}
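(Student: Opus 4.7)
The plan is to split the proof into two parts: first I would exhibit the functional $(1,0) \in B_{(\R^2, N^\ast)}$ as an exposing functional for the point $(1,0)$, and then use compactness of $\R^2$ to upgrade exposition to strong exposition.

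For the first step, observe that $N^\ast$ is normalized, so $(1,0) \in B_{(\R^2, N^\ast)}$. The functional $(a,b) \mapsto \langle (a,b), (1,0)\rangle = a$ satisfies $a \leq \|(a,b)\|_\infty \leq N(a,b) \leq 1$ on $B_{(\R^2, N)}$, and equals $1$ at $(1,0)$. The key observation is uniqueness of the maximizer: if $(a,b) \in B_{(\R^2, N)}$ has $a = 1$, then monotonicity of $N$ yields $N(1, b) \geq N(1,0) = 1$, so $N(1,b) = 1$; since $N$ is absolute, also $N(1,-b) = 1$. Then $(1, b)$ and $(1, -b)$ both lie in the unit ball and average to $(1,0)$, so the extremality of $(1,0)$ forces $b = 0$.

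For the second step, fix $\varepsilon > 0$ and consider $K_\varepsilon = \{(a,b) \in B_{(\R^2, N)} : |b| \geq \varepsilon\}$, a closed bounded, hence compact, subset of $\R^2$. The continuous map $(a,b) \mapsto a$ attains its maximum $M$ on $K_\varepsilon$, and by the previous paragraph $M < 1$ (since the only point with $a = 1$ is $(1,0)$, which is excluded from $K_\varepsilon$). Setting $\gamma = 1 - M > 0$ gives exactly the desired implication: any $(a,b) \in B_{(\R^2,N)}$ with $a > 1 - \gamma$ must lie outside $K_\varepsilon$, i.e., $|b| < \varepsilon$. This in turn implies that slices of $B_{(\R^2,N)}$ determined by the functional $(1,0)$ shrink in diameter to zero as the slice width tends to zero, which is precisely strong exposition.

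The only place where a nontrivial input is needed is in the uniqueness step, where extremality of $(1,0)$ is essential; there is no real obstacle, since the upgrade from exposed to strongly exposed is automatic in the finite-dimensional setting.
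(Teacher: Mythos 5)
Your proof is correct; the paper leaves this lemma unproved, remarking only that it is ``easily verified from the definitions,'' and your argument --- exposedness of $(1,0)$ via monotonicity and absoluteness of $N$ combined with extremality, followed by the finite-dimensional compactness upgrade to strong exposedness --- is exactly the intended verification. The one point worth making explicit is the trivial case where the set $K_\varepsilon$ is empty, in which any $\gamma>0$ works.
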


\begin{proposition}\label{prop: roughness from sum to factor}
Let $X$ and $Y$ be Banach spaces and $N$ an absolute normalized norm on $\R^2$ such that $(1,0)$ is an extreme point of $B_{(\mathbb{R}^2, N^\ast)}$.  If $X\oplus_{N}Y$ is $\delta$-average rough for some $\delta>0$, then $X$ is $\delta$-average rough.
\end{proposition}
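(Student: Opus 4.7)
The plan is to apply the $\delta$-average roughness of $Z := X\oplus_N Y$ at the test vectors $z_i = (x_i, 0) \in S_Z$ and then transfer the resulting inequality back to $X$. The main obstacle is that the crude bound $N(a,b)\leq a+b$ is far too lossy; the extreme point assumption on $(1,0)\in B_{(\mathbb{R}^2, N^\ast)}$ is exactly what provides the sharper local bound $N(a,b)\leq a + \kappa b$ when $b$ is small compared to $a$, and this is what makes the argument go through.

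To extract this local estimate, apply Lemma \ref{lemma: extreme point is an strongly exposed point} to $N^\ast$ (using $(N^\ast)^\ast = N$): for every $\kappa>0$ there is $\gamma>0$ such that $(c,d)\in B_{(\mathbb{R}^2, N^\ast)}$ with $c>1-\gamma$ forces $|d|<\kappa$. Writing $N(a,b) = \sup\{ac+bd : (c,d)\in B_{(\mathbb{R}^2, N^\ast)},\, c,d\geq 0\}$ (by absoluteness of $N^\ast$) and splitting the supremum according to whether $c\leq 1-\gamma$ or $c>1-\gamma$, one obtains $N(a,b)\leq a + \kappa b$ whenever $a,b\geq 0$ and $b/a$ is below a threshold depending only on $\gamma$ and $\kappa$.

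Given $x_1,\ldots,x_n\in S_X$ and $\varepsilon>0$, choose $\varepsilon_0\in(0,\varepsilon)$ and $\kappa>0$ small so that (i) $\delta - \varepsilon_0 - 2\kappa > 0$, (ii) $2\kappa \leq (\varepsilon-\varepsilon_0)\big((\delta-\varepsilon_0)/2 - \kappa\big)$, and (iii) the local estimate above applies to $N(\|x_i\pm x\|, \|y\|)$ whenever $\|x\|,\|y\|\leq \varepsilon_0$ (here $\|x_i\pm x\|\geq 1-\varepsilon_0$). Put $z_i = (x_i, 0)\in S_Z$. Applying Proposition \ref{prop: reformulations of rough}(iii) together with part (b) of the subsequent remark to $Z$ with parameter $\varepsilon_0$, we obtain $z = (x,y)\in Z$ with $\|z\|_N = \varepsilon_0$ and
\[
\frac{1}{n}\sum_{i=1}^n \big(N(\|x_i + x\|, \|y\|) + N(\|x_i - x\|, \|y\|)\big) > (\delta-\varepsilon_0)\varepsilon_0 + 2.
\]
The local estimate applied termwise then yields
\[
\frac{1}{n}\sum_{i=1}^n \big(\|x_i + x\| + \|x_i - x\|\big) > (\delta-\varepsilon_0)\varepsilon_0 + 2 - 2\kappa\|y\|. \qquad (\star)
\]

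The closing step uses $(\star)$ twice. The triangle inequality bound $\frac{1}{n}\sum_i(\|x_i+x\| + \|x_i - x\|) \leq 2 + 2\|x\|$ together with $\varepsilon_0 \geq \|y\|$ forces $\|x\| > \big((\delta-\varepsilon_0)/2 - \kappa\big)\|y\|$, so $\|y\|$ is dominated by $\|x\|$. Substituting this, together with $\varepsilon_0 \geq \|x\|$, back into $(\star)$ and invoking (ii) to absorb $2\kappa\|y\|$ into $(\varepsilon - \varepsilon_0)\|x\|$ gives
\[
\frac{1}{n}\sum_{i=1}^n \big(\|x_i + x\| + \|x_i - x\|\big) > (\delta-\varepsilon)\|x\| + 2,
\]
with $\|x\|\leq \varepsilon_0 < \varepsilon$. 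Hence $x$ witnesses $\delta$-average roughness of $X$ at $x_1,\ldots,x_n$ via Proposition \ref{prop: reformulations of rough}(iii).
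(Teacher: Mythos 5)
Your proof is correct, and it follows the same overall strategy as the paper's: test the roughness of $X\oplus_N Y$ at the points $(x_i,0)$ and use Lemma~\ref{lemma: extreme point is an strongly exposed point} (applied to $N^\ast$, with exposing functional $(1,0)\in B_{(\R^2,N)}$) to show that the $Y$-coordinate of the perturbation contributes only negligibly. Where you diverge is in how the extreme-point hypothesis is exploited. You distill it into a reusable local estimate $N(a,b)\leq a+\kappa b$ valid when $b/a$ is small, obtained by splitting the supremum over the positive part of $B_{(\R^2,N^\ast)}$ according to whether the first coordinate exceeds $1-\gamma$; the paper instead picks, for each $i$, norming functionals $(a_i,b_i)$ and $(c_i,d_i)$ for $N(\|x_i\pm u\|,\|v\|)$ and argues that the roughness inequality itself forces $a_i,c_i>1-\gamma$ (hence $b_i,d_i$ small), which is why it must take the specific perturbation size $\|z\|_N=\gamma/(2n)$. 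Your version decouples the perturbation size from $n$ and is arguably more transparent. Two small remarks: (1) the detour through the bound $\|x\|>\bigl((\delta-\varepsilon_0)/2-\kappa\bigr)\|y\|$ is unnecessary --- from $(\star)$ one can use $\|y\|\leq\varepsilon_0$ and $\|x\|\leq\varepsilon_0$ directly to get $(\delta-\varepsilon_0)\varepsilon_0-2\kappa\|y\|\geq(\delta-\varepsilon_0-2\kappa)\varepsilon_0\geq(\delta-\varepsilon_0-2\kappa)\|x\|$, so it suffices to arrange $\varepsilon_0+2\kappa\leq\varepsilon$ (this is essentially how the paper closes, comparing against $\|z\|_N\geq\|u\|$ only at the end); (2) when fixing constants, choose $\kappa$ first and only then $\varepsilon_0$, since the threshold in your local estimate depends on $\gamma=\gamma(\kappa)$; this ordering is consistent because conditions (i) and (ii) only become easier as $\varepsilon_0$ decreases.
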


\begin{proof}
Assume that $Z=X\oplus_{N}Y$ is $\delta$-average rough. Let $x_1,\dots, x_n\in S_X$ and $\varepsilon\in(0,\delta)$. We will show that there is a $u\in X$ such that $\|u\|\leq \varepsilon$ and
\begin{displaymath}
		\frac{1}{n}\sum_{i=1}^{n}\Big( \Vert x_{i}+u\Vert +\Vert x_{i}-u\Vert \Big)> \Big(\delta-\varepsilon\Big)\Vert u \Vert +2.
		\end{displaymath}
By Lemma~\ref{lemma: extreme point is an strongly exposed point}, there is a $\gamma\in(0,\frac{2\varepsilon}{3})$ such that, whenever $N^\ast(a,b)\leq 1$ and $a>1-\gamma$, then $|b|<\frac{\varepsilon}{3}$.

Consider $(x_i,0)\in S_{Z}$. Since $Z$ is $\delta$-average rough, there is a $z=(u,v)\in Z$ such that $\|z\|_N=\frac{\gamma}{2n}$ and
\begin{displaymath}
		\frac{1}{n}\sum_{i=1}^{n}\Big( \|(x_i,0)+(u,v)\|_N +\|(x_i,0)-(u,v)\|_N \Big)> (\delta-\gamma/2)\Vert z \Vert_N +2.
		\end{displaymath}
Choose $a_i,b_i, c_i,d_i\geq 0$ with $N^\ast(a_i,b_i)=N^\ast(c_i,d_i)=1$ such that
\begin{displaymath}
 a_i\Vert x_{i}+u\Vert+b_i\|v\|=N(\|x_i+u\|, \|v\|)
		\end{displaymath}
and
\begin{displaymath}
 c_i\Vert x_{i}-u\Vert+d_i\|v\|=N(\|x_i-u\|, \|v\|).
		\end{displaymath}
Then we have 		
\begin{displaymath}
		\frac{1}{n}\sum_{i=1}^{n}\Big( a_i(\Vert x_{i}\|+\|u\|)+b_i\|v\| +c_i(\Vert x_{i}\Vert+\|u\|)+d_i\|v\| \Big)> (\delta-\gamma/2)\Vert z \Vert_N +2,
		\end{displaymath}
which implies that
\[
\frac{a_i}{n}+\frac{n-1}{n}+1+2\|z\|_N>(\delta-\gamma/2)\Vert z \Vert_N +2.
\]
It follows that $a_i>1-\gamma$ and hence $b_i<\varepsilon/3$ for all $i\in\{1,\dots,n\}$. Similarly, one obtains that $c_i>1-\gamma$ and $d_i<\varepsilon/3$ for all $i\in\{1,\dots,n\}$.

Therefore
\begin{align*}
&\frac{1}{n}\sum_{i=1}^{n}\Big( \Vert x_{i}+u\Vert +\Vert x_{i}-u\Vert \Big)\\
&\geq\frac{1}{n}\sum_{i=1}^{n}\Big( a_i\|x_i+u\|\pm b_i\|v\| + c_i\|x_i-u\|\pm d_i\|v\| \Big)\\
&>(\delta-\gamma/2)\Vert z \Vert_N+2-2\frac{\varepsilon}{3}\|z\|_N\\
&=(\delta-\gamma/2-2\frac{\varepsilon}{3})\|z\|_N+2\\
&>(\delta-\varepsilon)\|u\|+2.
\end{align*}

\end{proof}

\begin{remark*}
One can prove similarly to Proposition~\ref{prop: roughness from sum to factor} that, if $N$ is an absolute normalized norm on $\R^2$ such that $(0,1)$ is an extreme point of $B_{(\mathbb{R}^2, N^\ast)}$ and $X\oplus_{N}Y$ is $\delta$-average rough for some $\delta>0$, then $Y$ is $\delta$-average rough.
\end{remark*}

\begin{corollary} 
If $X\oplus_p Y$ is $\delta$-average rough and $1<p\leq\infty$, then $X$ and $Y$ are $\delta$-average rough.
\end{corollary}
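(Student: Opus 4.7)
The plan is to apply Proposition~\ref{prop: roughness from sum to factor} together with the symmetric version stated in the remark immediately after it, once the extreme-point hypotheses are checked for the two-dimensional $\ell_p$-norm. Writing $N=\|\cdot\|_p$ on $\R^2$, the relevant dual norm is $N^\ast=\|\cdot\|_q$ with $1/p+1/q=1$. Since $1<p\le\infty$ corresponds exactly to $1\le q<\infty$, the task reduces to verifying that both $(1,0)$ and $(0,1)$ are extreme points of the unit ball of $(\R^2,\|\cdot\|_q)$ throughout this range.

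This extreme-point check splits into two easy cases. For $1<q<\infty$ the unit ball of $\ell_q^2$ is strictly convex, so every point of the unit sphere is extreme, in particular $(1,0)$ and $(0,1)$. For $q=1$ (i.e.\ $p=\infty$) the unit ball is the diamond $\conv\{\pm(1,0),\pm(0,1)\}$, and $(1,0),(0,1)$ are two of its four vertices, hence extreme. With the hypotheses verified, Proposition~\ref{prop: roughness from sum to factor} yields that $X$ is $\delta$-average rough, and the companion remark (with the roles of the two coordinates swapped) yields that $Y$ is $\delta$-average rough.

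There is no real obstacle; the only point worth flagging is why the assumption $p>1$ cannot be dropped. At $p=1$ one has $q=\infty$ and the unit ball of $\ell_\infty^2$ is the square $[-1,1]^2$, on whose boundary $(1,0)$ and $(0,1)$ lie in the relative interiors of edges and are therefore not extreme. So Proposition~\ref{prop: roughness from sum to factor} is not applicable, and indeed the conclusion genuinely fails at $p=1$: the earlier proposition in this section shows that $X\oplus_1 Y$ can inherit $\delta$-average roughness from just one of its summands.
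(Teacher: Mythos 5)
Your proposal is correct and is exactly the argument the paper intends: the corollary is stated immediately after Proposition~\ref{prop: roughness from sum to factor} and its companion remark, and the only content is the verification that $(1,0)$ and $(0,1)$ are extreme points of $B_{\ell_q^2}$ for $1\le q<\infty$, which you carry out correctly (strict convexity for $1<q<\infty$, vertices of the diamond for $q=1$). Your closing observation about why $p=1$ must be excluded is also accurate.
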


\section{Octahedrality and strong diameter two properties of absolute sums}\label{sec: OH and SD2P}

In this section, we characterize those absolute norms for which the direct sum of two octahedral Banach spaces is octahedral. In fact, there are many such norms besides the $\ell_1$- and $\ell_\infty$-norm. Since octahedrality and the strong diameter 2 property are dually connected, it follows that there are many absolute norms which preserve the strong diameter 2 property. In order to present these characterizations we will introduce the notions of positive octahedrality and the positive strong diameter 2 property. We end this section by proving that, similarly to the Daugavet property, among all of the absolute norms the diametral strong diameter 2 property is stable only for $\ell_1$- and $\ell_\infty$-sums.

We begin by recalling the following equivalent formulation of octahedrality from \cite{haller_duality_2015}.

\begin{proposition}[see {\cite[Proposition~2.2]{haller_duality_2015}}]\label{prop: reformulations of oct}
Let $X$ be a Banach space.
The following assertions are equivalent:
\begin{itemize}
\item[{\rm(i)}]
$X$ is octahedral;
\item[{\rm(ii)}]
whenever $n\in\N$, $x_1,\dotsc,x_n\in S_X$, and $\eps>\nobreak0$, there is~a $y\in S_X$ such that
\[
\|x_i+y\|\geq2-\eps\quad\text{for all $i\in\{1,\dotsc,n\}$.}
\]
\end{itemize}
\end{proposition}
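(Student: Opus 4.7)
The plan is to prove the equivalence (i) $\Leftrightarrow$ (ii) directly from the definition of octahedrality. The implication (i) $\Rightarrow$ (ii) is immediate: given $x_1,\dotsc,x_n\in S_X$ and $\varepsilon>0$, applying the definition of octahedrality to the finite-dimensional subspace $E=\spn\{x_1,\dotsc,x_n\}$ with parameter $\varepsilon/2$ yields a $y\in S_X$ with $\|x_i+y\|\geq (1-\varepsilon/2)(1+1)\geq 2-\varepsilon$ for every $i$.

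The substantive direction is (ii) $\Rightarrow$ (i). Given a finite-dimensional subspace $E\subseteq X$ and $\varepsilon>0$, I would first pick auxiliary parameters $\eta,\delta>0$ with $\eta+\delta<\varepsilon$. Since $\dim E<\infty$, the unit sphere $S_E$ is norm compact and so admits a finite $\eta$-net $\{x_1,\dotsc,x_n\}\subseteq S_E$. Applying (ii) to this net with $\delta$ in place of $\varepsilon$ produces a $y\in S_X$ such that $\|x_i+y\|\geq 2-\delta$ for every $i$. By Hahn--Banach, choose norming functionals $x_i^{\ast}\in S_{X^{\ast}}$ with $x_i^{\ast}(x_i+y)=\|x_i+y\|\geq 2-\delta$; since $|x_i^{\ast}(x_i)|,|x_i^{\ast}(y)|\leq 1$, this forces simultaneously $x_i^{\ast}(x_i)\geq 1-\delta$ and $x_i^{\ast}(y)\geq 1-\delta$.

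Now, for any nonzero $x\in E$, write $x=\|x\|x_0$ with $x_0\in S_E$ and pick $x_i$ satisfying $\|x_0-x_i\|<\eta$. Then $x_i^{\ast}(x_0)\geq x_i^{\ast}(x_i)-\eta\geq 1-\delta-\eta$, and consequently
\[
\|x+y\|\geq x_i^{\ast}(x+y)=\|x\|\,x_i^{\ast}(x_0)+x_i^{\ast}(y)\geq (1-\delta-\eta)(\|x\|+1)\geq (1-\varepsilon)(\|x\|+\|y\|).
\]
The case $x=0$ is trivial because $\|y\|=1$, so $X$ is octahedral.

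The main obstacle is precisely this extension step from the finite, unit-vector statement in (ii) to the full estimate of octahedrality, which must scale linearly with $\|x\|$ uniformly across an entire finite-dimensional subspace. The decisive trick is to extract, from each norming functional $x_i^{\ast}$ at $x_i+y$, the two simultaneous near-equalities $x_i^{\ast}(x_i)\approx 1$ and $x_i^{\ast}(y)\approx 1$; this dual witness converts the scalar inequality $\|x_i+y\|\geq 2-\delta$ into an inequality that scales correctly in $\|x\|$, and the $\eta$-net of $S_E$ then reduces an arbitrary direction $x_0$ to one of the $x_i$ with controlled error.
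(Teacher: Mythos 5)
Your proof is correct. The paper itself gives no proof of this proposition --- it is quoted from \cite[Proposition~2.2]{haller_duality_2015} --- and your argument (an $\eta$-net of the compact sphere $S_E$, plus norming functionals that split $\|x_i+y\|\geq 2-\delta$ into the two simultaneous estimates $x_i^{\ast}(x_i)\geq 1-\delta$ and $x_i^{\ast}(y)\geq 1-\delta$) is exactly the standard route by which this equivalence is established in that reference.
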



\begin{definition}
An element $(a,b)\in \R^2$ is \emph{positive} if $a\geq 0$ and $b\geq 0$. Let $N$ be an absolute normalized norm on $\R^2$. We say that $(\R^2,N)$ is \emph{positively octahedral} if whenever $n\in \mathbb{N}$ and positive $(a_1,b_1),\dots, (a_n,b_n)\in S_{(\R^2, N)}$ there is a positive $(c,d)\in S_{(\R^2,N)}$ such that
\[
N\big((a_i,b_i)+(c,d)\big)=2 \quad \text{ for all $i\in\{1,\dots,n\}$}.
\]
\end{definition}

\begin{remark*}
Note that $(\R^2, N)$ is positively octahedral if and only if there is a $(c,d)\in S_{(\R^2,N)}$ such that
\[
N\big((1,0)+(c,d)\big)=2 \qquad\text{and}\qquad N\big((0,1)+(c,d)\big)=2.
\]
\end{remark*}

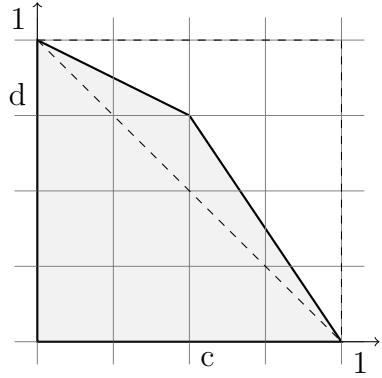
\begin{figure}[h!]
\begin{center}
\begin{tikzpicture}
\draw[thick, fill=gray!10] (0,0) -- (0,4) -- (2,3) -- (4,0) -- cycle;
\draw[step=1cm,gray,very thin] (-0.3,-0.3) grid (4.3,4.3);
\draw[->] (0,0) -- (4.5,0);
\draw[->] (0,0) -- (0,4.5);
\draw[dashed](0,4) -- (4,4) -- (4,0);
\draw[dashed](0,4) -- (4,0);
\node [below right] at (2,0) {c};
\node [above left] at (0,3) {d};
\node [below right] at (4,0) {1};
\node [above left] at (0,4) {1};
\end{tikzpicture}
\end{center}
\caption{First quadrant of the unit ball of a positively octahedral $(\R^2,N)$.}
\end{figure}

\newpage

\begin{theorem}\label{thm: absolute sum is OH}
Let $X$ and $Y$ be octahedral Banach spaces and $N$ an absolute normalized norm on $\R^2$. Then $X\oplus_N Y$ is octahedral if and only if $(\R^2,N)$ is positively octahedral.
\end{theorem}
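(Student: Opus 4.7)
I would verify the equivalence directly in both directions, exploiting the fact that positive octahedrality is a condition purely on scalars in $\R^2$, which can be matched with vector-level octahedrality of the summands by a rescaling maneuver.

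For sufficiency, assume $(\R^2, N)$ is positively octahedral. Given $z_i = (x_i, y_i) \in S_{X \oplus_N Y}$ for $i = 1, \ldots, n$ and $\varepsilon > 0$, the positive unit vectors $(\|x_i\|, \|y_i\|) \in S_{(\R^2, N)}$ yield, by positive octahedrality, a positive $(c, d) \in S_{(\R^2, N)}$ with
\[
N(\|x_i\| + c, \|y_i\| + d) = 2 \qquad \text{for all } i.
\]
Assume first $c, d > 0$. Octahedrality of $X$ applied to $E := \spn(x_1, \ldots, x_n)$ produces $\tilde u \in S_X$ with $\|x + \tilde u\| \geq (1 - \varepsilon)(\|x\| + 1)$ for every $x \in E$; specializing $x = x_i/c$ and scaling by $c$ gives $u := c\tilde u$ with $\|u\| = c$ and $\|x_i + u\| \geq (1 - \varepsilon)(\|x_i\| + c)$. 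Analogously, octahedrality of $Y$ furnishes $v \in Y$ with $\|v\| = d$ and the parallel estimate. Setting $z := (u, v)$, we have $\|z\|_N = N(c, d) = 1$, while monotonicity and homogeneity of $N$ give
\[
\|z_i + z\|_N = N(\|x_i + u\|, \|y_i + v\|) \geq (1 - \varepsilon)\, N(\|x_i\| + c, \|y_i\| + d) = 2(1 - \varepsilon),
\]
so Proposition~\ref{prop: reformulations of oct} yields octahedrality of $X \oplus_N Y$. The degenerate cases $c = 0$ or $d = 0$ (which force $(c, d) = (0, 1)$ or $(1, 0)$) are handled by taking $u = 0$ or $v = 0$ and invoking only the other summand's octahedrality, using the single-coordinate monotonicity $N(a, \lambda b) \geq \lambda N(a, b)$ for $\lambda \in [0, 1]$.

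For necessity, assume $X \oplus_N Y$ is octahedral. By the remark following the definition of positive octahedrality, it suffices to produce positive $(c, d) \in S_{(\R^2, N)}$ with $N(1 + c, d) = N(c, 1 + d) = 2$. Fix $x_0 \in S_X$ and $y_0 \in S_Y$ and apply Proposition~\ref{prop: reformulations of oct} to the two unit vectors $(x_0, 0), (0, y_0) \in S_{X \oplus_N Y}$ with $\varepsilon = 1/k$ to obtain $(u_k, v_k) \in S_{X \oplus_N Y}$ satisfying $\|(x_0, 0) + (u_k, v_k)\|_N \geq 2 - 1/k$ and $\|(0, y_0) + (u_k, v_k)\|_N \geq 2 - 1/k$. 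Setting $c_k := \|u_k\|$ and $d_k := \|v_k\|$, the triangle inequality $\|x_0 + u_k\| \leq 1 + c_k$ combined with monotonicity of $N$ gives $N(1 + c_k, d_k) \geq 2 - 1/k$, and symmetrically for $N(c_k, 1 + d_k)$. The points $(c_k, d_k)$ lie in the compact set $\{(a, b) \in \R^2 : a, b \geq 0,\ N(a, b) = 1\}$; any cluster point $(c, d)$ is positive and unit-norm, and continuity of $N$ together with the triangle-inequality upper bound $N(1 + c, d) \leq N(1, 0) + N(c, d) = 2$ (and similarly for $N(c, 1 + d)$) pin these values at exactly $2$.

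\textbf{Main obstacle.} The essence of the proof is the rescaling trick in the sufficiency direction: octahedrality of $X$ natively produces witnesses in the unit sphere, but to assemble a norm-one element of $X \oplus_N Y$ matching the scalar witness $(c, d)$ one needs an $X$-witness of prescribed norm $c \in (0, 1]$, obtained by applying octahedrality to the rescaled elements $x_i/c \in E$ and multiplying the conclusion by $c$. Once this maneuver is identified, both implications reduce to monotonicity and homogeneity of $N$, plus a routine compactness argument for the converse.
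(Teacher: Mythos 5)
Your proof is correct and follows essentially the same route as the paper's: necessity by projecting an octahedrality witness $(u,v)$ to $(\|u\|,\|v\|)$ and using monotonicity of $N$, and sufficiency by taking the scalar witness $(c,d)$ and pairing it with octahedrality witnesses $u=c\tilde u$, $v=d\tilde v$ in the factors. The only cosmetic differences are that you reduce necessity to the two points $(1,0),(0,1)$ via the remark and make the compactness step explicit (the paper argues with general $(a_i,b_i)$ and leaves the passage from $>2-\varepsilon$ to $=2$ implicit), and that your separate treatment of the degenerate cases $c=0$ or $d=0$ in the sufficiency direction is not actually needed, since the monotonicity estimate $N(\|x_i+cx\|,\|y_i+dy\|)\geq(1-\varepsilon)N(\|x_i\|+c,\|y_i\|+d)$ holds uniformly.
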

\begin{proof}
\emph{Necessity.} Assume that $X\oplus_N Y$ is octahedral. Let $\varepsilon>0$ and positive $(a_1,b_1),\dotsc,(a_n,b_n)\in S_{(\R^2, N)}$. We will show that there is a positive $(c,d)\in S_{(\R^2, N)}$ such that
\[
N\Big((a_i,b_i)+(c,d)\Big)>2-\varepsilon \quad \text{ for all $i\in\{1,\dots,n\}$}.
\]
Let $x_i\in X$ and $y_i\in Y$ be such that $\|x_i\|=a_i$ and $\|y_i\|=b_i$. Since $X\oplus_N Y$ is octahedral, there exists a $(u,v)\in S_{X\oplus_N Y}$ such that $\|(u,v)\|_N=1$ and 
\[
\|(x_i,y_i)+(u,v)\|_N>2-\varepsilon \quad \text{ for all $i\in\{1,\dots,n\}$}.
\]
Take $c=\|u\|$ and $d=\|v\|$. Then for every $i$
\begin{align*}
N\Big((a_i,b_i)+(c,d)\Big)&=N(a_i+c,b_i+d)\\
&=N(\|x_i\|+\|u\|, \|y_i\|+\|v\|)\\
&\geq N(\|x_i+u\|, \|y_i+v\|)\\
&> 2-\varepsilon.
\end{align*}
\emph{Sufficiency.} Assume that $(\R^2, N)$ is positively octahedral. Let $(x_1, y_1),\dotsc,
(x_n,y_n)\in X\oplus_N Y$ be with norm one and $\varepsilon>0$. We will show that there is a $(u,v)\in X\oplus_N Y$ with norm one such that
\[
\|(x_i,y_i)+(u,v)\|_N\geq (1-\varepsilon)(2-\varepsilon) \quad \text{ for all $i\in\{1,\dots,n\}$}.
\]
Since $(\R^2, N)$ is positively octahedral, there is a positive $(c,d)\in S_{(\R^2, N)}$ such that 
\[
N(\|x_i\|+c, \|y_i\|+d)\geq 2-\varepsilon\quad \text{ for all $i\in\{1,\dots,n\}$}.
\]
Since $X$ and $Y$ are octahedral, there are $x\in S_X$ and $y\in S_Y$ such that 
\[
\|x_i+tx\|\geq (1-\varepsilon)(\|x_i\|+t)\quad \text{ for all $t\geq 0$}
\]
and
\[
\|y_i+ty\|\geq (1-\varepsilon)(\|x_i\|+t)\quad \text{ for all $t\geq 0$}.
\]
Take $u=cx$ and $v=dy$. It follows that $\|(u,v)\|_N=1$ and
\begin{align*}
\|(x_i,y_i)+(u,v)\|_N&=N(\|x_i+cx\|, \|y_i+dy\|)\\
&\geq (1-\varepsilon)N\Big(\|x_i\|+c, \|y_i\|+d\Big)\\
&\geq(1-\varepsilon)(2-\varepsilon).
\end{align*}
\end{proof}

Recall (see \cite{kadets_thickness_2011}) that a Banach space $X$ has the \emph{almost Daugavet property} if there is a 1-norming subspace $Y$ of $\Xast$ such that 
\[
\|Id+T\|=1+\|T\|
\]
holds true for every rank-one operator $T\colon X\to X$ of the form $T=\yast\otimes x$, where $x\in X$ and $\yast\in Y$. This definition is a generalization of the well-known Daugavet property, where $Y=\Xast$. In \cite[Propositions~2.1 and 2.2]{lücking_subspaces_2011}, it is shown that if $X$ and $Y$ are separable Banach spaces with the almost Daugavet property, then $X\oplus_1 Y$ and $X\oplus_\infty Y$ have the almost Daugavet property too. Since the almost Daugavet property and octahedrality coincide for separable Banach spaces (see \cite[Theorem~1.1]{kadets_thickness_2011}), we immediately get from Theorem~\ref{thm: absolute sum is OH} the following stability result for almost Daugavet spaces.

\begin{corollary}\label{cor: almost Daugavet}
Let $X$ and $Y$ be separable Banach spaces with the almost Daugavet property and $N$ an absolute normalized norm on $\R^2$. Then $X\oplus_N Y$ has the almost Daugavet property if and only if $(\R^2,N)$ is positively octahedral.
\end{corollary}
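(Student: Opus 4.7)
The plan is to reduce Corollary~\ref{cor: almost Daugavet} entirely to Theorem~\ref{thm: absolute sum is OH} via the characterization of the almost Daugavet property in the separable setting. The key input is \cite[Theorem~1.1]{kadets_thickness_2011}, which (as already invoked in the paragraph preceding the corollary) asserts that a separable Banach space has the almost Daugavet property if and only if it is octahedral. Since $X$ and $Y$ are separable, the direct sum $X\oplus_N Y$ is also separable: its dense countable set can be built from the product of countable dense sets in $X$ and $Y$, using that $N$ is a continuous norm on $\R^2$.

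With separability of $X$, $Y$, and $X\oplus_N Y$ in hand, the equivalence becomes a routine translation. First I would note that by \cite[Theorem~1.1]{kadets_thickness_2011} applied to $X$ and to $Y$, the assumption that $X$ and $Y$ have the almost Daugavet property is equivalent to $X$ and $Y$ being octahedral. Likewise, applying \cite[Theorem~1.1]{kadets_thickness_2011} to the separable space $X\oplus_N Y$, the conclusion that $X\oplus_N Y$ has the almost Daugavet property is equivalent to $X\oplus_N Y$ being octahedral.

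Having replaced both occurrences of ``almost Daugavet property'' by ``octahedrality'' in the statement, I would then invoke Theorem~\ref{thm: absolute sum is OH} directly: for octahedral Banach spaces $X$ and $Y$, the sum $X\oplus_N Y$ is octahedral if and only if $(\R^2, N)$ is positively octahedral. Chaining these equivalences gives exactly the corollary.

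There is essentially no obstacle here; the only thing one has to be mildly careful about is the direction of the implications, since \cite[Theorem~1.1]{kadets_thickness_2011} must be applied both ``forward'' (to pass from the almost Daugavet hypothesis on $X, Y$ to octahedrality, so that Theorem~\ref{thm: absolute sum is OH} is applicable) and ``backward'' (to translate the octahedrality of the sum back into the almost Daugavet property). Both directions require separability, which is why the hypothesis is stated for separable $X$ and $Y$ and why one should verify at the outset that separability is inherited by $X\oplus_N Y$.
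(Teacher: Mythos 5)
Your argument is exactly the paper's: the corollary is deduced by combining the equivalence of the almost Daugavet property with octahedrality for separable spaces (\cite[Theorem~1.1]{kadets_thickness_2011}) with Theorem~\ref{thm: absolute sum is OH}, noting that $X\oplus_N Y$ remains separable. The proposal is correct and matches the paper's (one-line) proof.
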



In order to characterize those absolute norms which preserve the strong diameter 2 property, we introduce the following notion.

\begin{definition}
Let $N$ be an absolute normalized norm on $\R^2$. We say that $\R^2$ has the \emph{positive strong diameter 2 property} if whenever $n\in \mathbb{N}$, 
positive $f_1,\dots,f_n\in S_{(\R^2, N^\ast)}$, $\alpha_1,\dots,\alpha_n>0$, and $\lambda_1,\dots, \lambda_n\geq 0$ with $\sum_{i=1}^n \lambda_i=1$ there are positive $(a_i,b_i)\in S(B_{(\R^2, N)}, f_i, \alpha_i)$ such that
\[
N\Big(\sum_{i=1}^n \lambda_i(a_i,b_i)\Big)=1. 
\]
\end{definition}
\begin{remark*}
Note that $(\mathbb R^2,N)$ has the positive strong diameter 2 property if and only if there are $a,b\geq 0$ such that $N(a,1)=N(1,b)=1$ and $N\Big(\dfrac{1}{2}(a,1)+\dfrac{1}{2}(1,b)\Big)=1$.
\end{remark*}
\begin{figure}[h!]
\begin{center}
\begin{tikzpicture}
\draw[thick, fill=gray!10] (0,0) -- (4,0) -- (4,1) -- (2,4) -- (0,4) -- cycle;
\draw[step=1cm,gray,very thin] (-0.3,-0.3) grid (4.3,4.3);
\draw[->] (0,0) -- (4.5,0);
\draw[->] (0,0) -- (0,4.5);
\draw[dashed](4,1) -- (4,4) -- (2,4);
\draw[dashed](0,4) -- (4,0);
\node [below right] at (2,0) {a};
\node [above left] at (0,1) {b};
\node [below right] at (4,0) {1};
\node [above left] at (0,4) {1};
\end{tikzpicture}
\end{center}
\caption{First quadrant of the unit ball of $(\R^2,N)$ with the positive strong diameter 2 property.}
\end{figure}
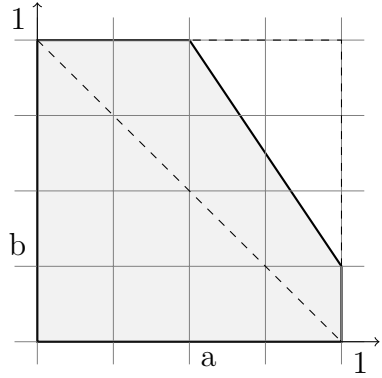

\newpage

\begin{proposition}\label{prop: pos SD2P = pos OH}
	Let $N$ be an absolute normalized norm on $\R^{2}$. The space $(\R^{2},N)$ has the positive strong diameter 2 property if and only if $(\R^{2},N^{*})$ is positively octahedral.
\end{proposition}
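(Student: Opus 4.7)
The plan is to use the equivalent characterizations given in the two remarks preceding the proposition: the positive strong diameter 2 property of $(\R^2, N)$ amounts to the existence of $a, b \geq 0$ with $N(a, 1) = N(1, b) = 1$ and $N(1+a, 1+b) = 2$, whereas positive octahedrality of $(\R^2, N^\ast)$ amounts to the existence of $c, d \geq 0$ with $N^\ast(c, d) = 1$ and $N^\ast(1+c, d) = N^\ast(c, 1+d) = 2$. In each direction I plan to produce one witness explicitly from the other.

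For the forward direction, given a positive strong diameter 2 witness $(a, b)$, I would apply the Hahn--Banach theorem to the norm-one vector $(\frac{1+a}{2}, \frac{1+b}{2})$ in $(\R^2, N)$ to obtain a supporting functional $(c, d)$ with $N^\ast(c, d) = 1$ and $c(1+a) + d(1+b) = 2$; since $1+a, 1+b \geq 0$, absoluteness of $N^\ast$ lets us assume $c, d \geq 0$. The crucial observation is then that, because $(a, 1)$ and $(1, b)$ both lie in $B_{(\R^2, N)}$, the inequalities $ca + d \leq 1$ and $c + db \leq 1$ both hold, yet their sum equals $c(1+a) + d(1+b) = 2$, so each must be an equality: $ca + d = c + db = 1$. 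Pairing $(1+c, d)$ with $(1, b)$ then yields $(1+c) + db = 1 + (c + db) = 2$, hence $N^\ast(1+c, d) \geq 2$, and the reverse inequality $N^\ast(1+c, d) \leq N^\ast(1, 0) + N^\ast(c, d) = 2$ is immediate; the identity $N^\ast(c, 1+d) = 2$ follows symmetrically by pairing with $(a, 1)$.

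For the converse direction, I would reverse the construction. Given a positive octahedrality witness $(c, d)$ for $N^\ast$, compactness of $B_{(\R^2, N)}$ together with absoluteness of $N$ provides positive $(\alpha, \beta), (\alpha', \beta') \in B_{(\R^2, N)}$ with $\alpha(1+c) + \beta d = 2$ and $\alpha' c + \beta'(1+d) = 2$. Since $\alpha c + \beta d \leq N^\ast(c, d)\, N(\alpha, \beta) \leq 1$ and $\alpha \leq N(\alpha, \beta) \leq 1$ (using $N \geq \Vert \cdot \Vert_\infty$), the first equation forces $\alpha = 1$ and $c + \beta d = 1$; analogously $\beta' = 1$ and $\alpha' c + d = 1$. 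Setting $a = \alpha'$ and $b = \beta$ gives $N(a, 1) = N(1, b) = 1$ and $c(1+a) + d(1+b) = c + (1-d) + d + (1-c) = 2$, whence $N(1+a, 1+b) \geq 2$ by the duality inequality and $N(1+a, 1+b) \leq N(a, 1) + N(1, b) = 2$ by the triangle inequality.

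The heart of the argument is the identity $ca + d = c + db = 1$ in the forward direction: it says that the supporting functional at the midpoint $(\frac{1+a}{2}, \frac{1+b}{2})$ in fact supports $B_{(\R^2, N)}$ at each of $(a, 1)$ and $(1, b)$ individually, so that these two unit vectors serve as the witnesses for $N^\ast(1+c, d) = 2$ and $N^\ast(c, 1+d) = 2$. Degenerate subcases where some of $a, b, c, d$ vanish cause no trouble: they force $N = \Vert \cdot \Vert_1$ or $N = \Vert \cdot \Vert_\infty$, where the claimed witnesses still exist (e.g., $a = b = 0$ for $N = \Vert \cdot \Vert_1$ and $a = b = 1$ for $N = \Vert \cdot \Vert_\infty$), and the arguments above adapt with only notational care.
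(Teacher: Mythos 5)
Your proposal is correct and follows essentially the same route as the paper: in the forward direction both take the supporting functional $(c,d)$ at the midpoint $\frac12(a,1)+\frac12(1,b)$ and observe that it must support $B_{(\R^2,N)}$ at $(a,1)$ and $(1,b)$ individually, and in the converse both extract norming vectors for $(1+c,d)$ and $(c,1+d)$, force their first (resp.\ second) coordinates to equal $1$, and evaluate $(c,d)$ at the resulting midpoint. The only differences are notational (you phrase the midpoint condition as $N(1+a,1+b)=2$ and make the ``two inequalities summing to $2$ force equality'' step explicit), so no further comment is needed.
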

\begin{proof}
\emph{Necessity.} 
Assume that $(\mathbb R^2, N)$ has the positive strong diameter 2 property. So there are $a,b\geq 0$ such that $N(a,1)=N(1,b)=1$ and
\[
N\Big(\frac12(a,1)+\frac12(1,b)\Big)=1.
\]
Let $c,d\geq 0$ be such that $N^\ast(c,d)=1$ and 
\[
(c,d)\Big(\frac12(a,1)+\frac12(1,b)\Big)=1.
\]
It implies that $(c,d)(a,1)=(c,d)(1,b)=1$. Hence
\[
N^\ast((1,0)+(c,d))=((1,0)+(c,d))(1,b)=2
\]
and
\[
N^\ast((0,1)+(c,d))=((0,1)+(c,d))(a,1)=2.
\]
Therefore $(\mathbb R^2,N^\ast)$ is positively octahedral.

\emph{Sufficiency.} 
Assume now that $(\mathbb R^2,N^\ast)$ is positively octahedral. So there exist $c,d\geq 0$ such that $N^\ast(c,d)=1$ and 
\[
N^\ast((1,0)+(c,d))=2\qquad \text{and}\qquad N^\ast((0,1)+(c,d))=2.
\]
Let $a,b,x,y\geq0$ be such that $N(a,y)=1$, $N(x,b)=1$, 
\[
((1,0)+(c,d))(x,b)=2,
\]
and
\[
((0,1)+(c,d))(a,y)=2.
\]

It follows that $(1,0)(x,b)=1$ and $(0,1)(a,y)=1$ which means that $x=y=1$. Hence
\[
N\Big(\frac12(a,1)+\frac12(1,b)\Big)=(c,d)\Big(\frac12(a,1)+\frac12(1,b)\Big)=\frac12+\frac12=1.
\]
Therefore $(\mathbb \R^2,N)$ has the positive strong diameter 2 property.
\end{proof}

The duality between the strong diameter 2 property and octahedrality, Theorem~\ref{thm: absolute sum is OH}, and Proposition~\ref{prop: pos SD2P = pos OH} yield the following result, however, we prefer to give its direct proof.

\begin{theorem}\label{thm: absolute sum has SD2P}
Let $X$ and $Y$ be Banach spaces with the strong diameter 2 property and $N$ an absolute normalized norm on $\R^2$. Then $X\oplus_N Y$ has the strong diameter 2 property if and only if $(\R^2,N)$ has the positive strong diameter 2 property.
\end{theorem}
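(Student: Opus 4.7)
\emph{Plan.} A short path is available via duality: $X$ has SD2P iff $\Xast$ is octahedral, $(X\oplus_NY)^\ast=\Xast\oplus_{N^\ast}\Yast$, and so Theorem~\ref{thm: absolute sum is OH} combined with Proposition~\ref{prop: pos SD2P = pos OH} yields both directions. Since the authors explicitly want a direct proof, I would mirror the structure of the proof of Theorem~\ref{thm: absolute sum is OH}, using $\Zs=\Xs\oplus_{N^\ast}\Ys$ only to decode functionals on $Z=X\oplus_NY$ as pairs $(\xs_i,\ys_i)$ and exploiting the monotonicity and continuity of $N$.

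\emph{Necessity.} Assume $Z$ has SD2P. By the remark following the definition of positive SD2P, it suffices to exhibit $a,b\in[0,1]$ with $N(a,1)=N(1,b)=1$ and $N(\tfrac{1+a}{2},\tfrac{1+b}{2})=1$. I would set $a=\max\{s\in[0,1]:N(s,1)=1\}$ and $b=\max\{t\in[0,1]:N(1,t)=1\}$, which exist by continuity of $N$ (the ``$\leq 1$'' version of the defining inequality collapses to ``$=1$'' because $N(s,1)\geq N(0,1)=1$ by monotonicity). Convexity of $B_{(\R^2,N)}$ already gives $N(\tfrac{1+a}{2},\tfrac{1+b}{2})\leq 1$, so only the reverse inequality is at stake. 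Fix $\xs\in S_{\Xast}$, $\ys\in S_{\Yast}$ and put $F_1=(\xs,0)$, $F_2=(0,\ys)\in S_{\Zs}$. For small $\alpha,\eps>0$, SD2P applied to $\tfrac12S(B_Z,F_1,\alpha)+\tfrac12S(B_Z,F_2,\alpha)$ furnishes $w=\tfrac12(u,v)+\tfrac12(p,q)$ of $N$-norm at least $1-\eps$, with $(u,v)\in S(B_Z,F_1,\alpha)$ and $(p,q)\in S(B_Z,F_2,\alpha)$. Slice membership gives $\|u\|,\|q\|>1-\alpha$; monotonicity applied to $N(\|u\|,\|v\|)\leq 1$ forces $\|v\|\leq b_\alpha:=\max\{t:N(1-\alpha,t)\leq 1\}$, and symmetrically $\|p\|\leq a_\alpha$. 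Triangle inequality and monotonicity of $N$ then chain into
\[
1-\eps\leq\|w\|_N\leq N\bigl(\tfrac{1+a_\alpha}{2},\tfrac{b_\alpha+1}{2}\bigr),
\]
and letting $\alpha,\eps\to 0$ (with $a_\alpha\to a$, $b_\alpha\to b$ by continuity of $N$) delivers the matching lower bound.

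\emph{Sufficiency.} Assume $(\R^2,N)$ has positive SD2P and let $C=\sum_i\lambda_iS(B_Z,F_i,\alpha_i)$ be a convex combination of slices of $B_Z$, with $F_i=(\xs_i,\ys_i)$, $c_i=\|\xs_i\|$, $d_i=\|\ys_i\|$, so that $(c_i,d_i)$ is positive with $N^\ast(c_i,d_i)=1$. I would pick auxiliary $\alpha_i',\beta_i>0$ with $(1-\alpha_i')(1-\beta_i)>1-\alpha_i$, apply positive SD2P of $(\R^2,N)$ to the data $(c_i,d_i),\alpha_i',\lambda_i$ to obtain positive $(a_i,b_i)\in S(B_{(\R^2,N)},(c_i,d_i),\alpha_i')$ with $N\bigl(\sum_i\lambda_i(a_i,b_i)\bigr)=1$, and then invoke SD2P of $X$ on the convex combination $\sum_i\mu_iS(B_X,\xs_i/c_i,\beta_i)$, with $\mu_i=\lambda_ia_i/\sum_j\lambda_ja_j$, to produce $u_i^\pm$ satisfying $\bigl\|\sum_i\lambda_ia_i(u_i^+-u_i^-)\bigr\|\geq(2-\delta)\sum_i\lambda_ia_i$, and analogously $v_i^\pm$ in $Y$. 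The two points $w_\pm=\sum_i\lambda_i(a_iu_i^\pm,b_iv_i^\pm)$ lie in $C$ because each $i$th summand has $Z$-norm at most $N(a_i,b_i)\leq 1$ and $F_i$-value at least $(1-\beta_i)(a_ic_i+b_id_i)>1-\alpha_i$, and
\[
\|w_+-w_-\|_N\geq N\bigl((2-\delta)\textstyle\sum_i\lambda_ia_i,\,(2-\delta)\sum_i\lambda_ib_i\bigr)=2-\delta,
\]
so $\diam C=2$.

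\emph{Main obstacle.} The delicate points are the simultaneous calibration of the three small parameters $(\alpha_i,\alpha_i',\beta_i)$ in the sufficiency direction, and the degenerate cases where some of $c_i,d_i,a_i,b_i$ vanish (so that $\xs_i/c_i$ or the weight $\mu_i$ is undefined). I would handle these in the spirit of the $c=0$ case in the proof of Theorem~\ref{thm: abs sum delta rough}: the vanishing summand drops out of the corresponding factor's computation and the remaining one-sided slice estimate still suffices.
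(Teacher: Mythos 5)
Your proof is correct. The sufficiency half is essentially the paper's own argument: obtain positive $(a_i,b_i)$ from the positive SD2P of $(\R^2,N)$, note $a_iS^X_i\times b_iS^Y_i\subset S_i$, reweight with $\mu_i=\lambda_ia_i/\sum_j\lambda_ja_j$, and apply the SD2P of $X$ and $Y$; your parameter bookkeeping $(1-\alpha_i')(1-\beta_i)>1-\alpha_i$ matches the paper's $(1-\delta)(1-\alpha_i/2)\geq 1-\alpha_i$, and the degenerate cases you flag are exactly the ones the paper treats separately. The necessity half takes a genuinely different route: the paper verifies the definition of positive SD2P for \emph{arbitrary} positive data $(c_i,d_i),\alpha_i,\lambda_i$ by lifting to functionals $(\xs_i,\ys_i)$ with $\|\xs_i\|=c_i$, $\|\ys_i\|=d_i$ and projecting norms back down (obtaining $N(\sum\lambda_i(a_i,b_i))>1-\varepsilon$, with exact equality then following from compactness), whereas you aim only at the two-point criterion of the remark, using the two canonical slices given by $(\xs,0)$ and $(0,\ys)$ and the extremal $a=\max\{s:N(s,1)=1\}$, $b=\max\{t:N(1,t)=1\}$. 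Your version is shorter and the limiting argument $a_\alpha\to a$, $b_\alpha\to b$ is sound, but it leans on the remark's equivalence, which the paper states without proof (it is true: once the midpoint of $[(a,1),(1,b)]$ has norm one, the whole segment lies on the sphere and every positive functional attains its maximum there, which gives the general form of the definition). The paper's necessity argument buys independence from that remark at the cost of carrying general data; yours buys brevity at the cost of invoking it.
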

\begin{proof}

\emph{Necessity.} Assume that $X\oplus_N Y$ has the strong diameter 2 property. We will show that $(\R^2, N)$ has the positive strong diameter 2 property. Let $(c_1,d_1),\dots, (c_n,d_n)$ be positive elements in $S_{(\R^2, N^\ast)}$, $\alpha_1,\dots,\alpha_n>0$, $\lambda_i>0$ with $\sum_{i=1}^n\lambda_i=1$, and $\varepsilon>0$. We will show that there are positive $(a_i,b_i)\in B_{(\R^2, N)}$ such that $c_ia_i+d_ib_i>1-\alpha_i$ and $N(\sum_{i=1}^n\lambda_i(a_i,b_i))>1-\varepsilon$.

Let $(\xast_i, \yast_i)\in S_{X^\ast\oplus_{N^\ast} Y^\ast}$ be such that $\|\xast_i\|=c_i$ and $\|\yast_i\|=d_i$ for every $i$. Since $X\oplus_N Y$ has the strong diameter 2 property, there are 
\[
(x_i,y_i)\in S(B_{X\oplus_N Y}, (\xast_i, \yast_i),\alpha_i)
\]
such that $\|\sum_{i=1}^n\lambda_i(x_i,y_i)\|_N\geq 1-\varepsilon$.

Take $(a_i,b_i)=(\|x_i\|,\|y_i\|)$. Then $c_ia_i+d_ib_i>1-\alpha_i$, because
\[
c_ia_i+d_ib_i=\|\xast_i\|\|x_i\|+\|\yast_i\|\|y_i\|\geq \xast_i(x_i)+\yast_i(y_i)>1-\alpha_i
\]
and
\begin{align*}
N\Big(\sum_{i=1}^n\lambda_i(a_i,b_i)\Big)&=N\Big(\sum_{i=1}^n\lambda_i\|x_i\|,\sum_{i=1}^n\lambda_i\|y_i\|\Big)\\
&\geq N\Big(\|\sum_{i=1}^n\lambda_ix_i\|,\|\sum_{i=1}^n\lambda_iy_i\|\Big)\\
&=\|\sum_{i=1}^n\lambda_i(x_i,y_i)\|_N\geq 1-\varepsilon.
\end{align*}

\emph{Sufficiency.} We use an idea from \cite{haller_diametral_2016}. Assume that $(\R^2, N)$ has the positive strong diameter 2 property. Let $S_1,\dots, S_n$ be slices of $B_{X\oplus_N Y}$ defined by norm one functionals $(\xast_i, \yast_i)$ and scalars $\alpha_i>0$. Let $\lambda_i>0$ be such that $\sum_{i=1}^n\lambda_i=1$. We will show that the diameter of $\sum_{i=1}^{n}\lambda_{i}S_{i}$ is 2.

Let $\varepsilon>0$. Consider the slices $S^X_i=S(B_X,\frac{\xast_i}{\|\xast_i\|},\frac{\alpha_i}{2})$ and $S^Y_i=S(B_Y,\frac{\yast_i}{\|\yast_i\|},\frac{\alpha_i}{2})$ (If $\xast_i=0$, then $S^X_i=B_X$ and if $\yast_i=0$, then $S^Y_i=B_Y$).

Since $(\R^2, N)$ has the positive strong diameter 2 property, there are positive $(a_i,b_i)\in S(B_{(\R^2, N)}, (\|\xast_i\|,\|\yast_i\|),\delta)$ such that $N\Big(\sum_{i=1}^n\lambda_i(a_i,b_i)\Big)>1-\delta$, where $\delta>0$ satisfies $(1-\delta)(1-\alpha_i/2)\geq 1-\alpha_i$ for all $i\in\{1,\dots,n\}$.

It turns out that $a_iS^X_i\times b_iS^Y_i\subset S_i$. Indeed, if $x\in S^X_i$ and $y\in S^Y_i$, then
\[
\|(a_ix,b_iy)\|_N=N(a_i\|x\|,b_i\|y\|)\leq N(a_i,b_i)\leq 1
\]
and
\[
a_i\xast_i(x)+b_i\yast_i(y)>(1-\delta)(1-\frac{\alpha_i}{2})\geq 1-\alpha_i.
\]
Denote by
	\begin{displaymath}
	a=\sum_{i=1}^{n}\lambda_{i}a_{i} \qquad\text{and}\qquad b =\sum_{i=1}^{n}\lambda_{i}b_{i}.
	\end{displaymath}
Suppose that $a\neq 0$ and $b\neq 0$.
	For every $i$, denote by
	\begin{displaymath}
	\mu_{i}=\frac{\lambda_{i}a_{i}}{a} \qquad \text{and}\qquad \nu_{i}=\frac{\lambda_{i}b_{i}}{b}.
	\end{displaymath}
	As $X$ and $Y$ have the strong diameter 2 property, then there are $\widehat{x}, \widehat{u}\in \sum_{i=1}^n\mu_iS^X_i$ and $\widehat{y}, \widehat{v}\in \sum_{i=1}^n\nu_iS^Y_i$ such that $\|\widehat{x}-\widehat{u}\|\geq 2-\varepsilon$ and $\|\widehat{y}-\widehat{v}\|\geq 2-\varepsilon$.
	Take $x=a\widehat{x}$, $y=b\widehat{y}$, $u=a\widehat{u}$, and $v=b\widehat{v}$. Then $(x,y),(u,v)\in\sum_{i=1}^n\lambda_iS_i$, because $x,u\in \sum_{i=1}^n\lambda_ia_iS^X_i$ and $y,v\in \sum_{i=1}^n\lambda_ib_iS^Y_i$.
	Finally, 
	\begin{align*}
	\|(x,y)-(u,v)\|_N&=N(\|x-u\|,\|y-v\|)\\
	&\geq (2-\varepsilon)N(a,b)\\
	&>(2-\varepsilon)(1-\delta).
	\end{align*}
	Consider now the case, where $a=0$ or $b=0$. Assume that $a=0$. Since
	\begin{displaymath}
	\lbrace 0 \rbrace \times S_{i}^{Y} \subset S_{i},
	\end{displaymath}
	then
	\begin{displaymath}
	\lbrace 0 \rbrace \times \sum_{i=1}^{n} \lambda_{i}S_{i}^{Y} \subset \sum_{i=1}^{n}\lambda_{i}S_{i}.
	\end{displaymath}
	As the diameter of $ \sum_{i=1}^{n}\lambda_{i}S_{i}^{Y} $ is 2, there are $y,v\in \sum_{i=1}^{n}\lambda_{i}S_{i}^{Y}$
	such that
	\begin{displaymath}
	\Vert y-v\Vert \geq 2-\varepsilon.
	\end{displaymath}
Thus $(0,y),(0,v)\in \sum_{i=1}^{n}\lambda_{i}S_{i}$. Now we have
	\begin{align*}
	\Vert (0,y)-(0,v)\Vert_{N}&= N(0,\Vert y-v\Vert)\\ &=\Vert y-v\Vert \\
	&\geq 2-\varepsilon.
	\end{align*}
\end{proof}

We now turn our attention to investigate the stability of the diametral strong diameter 2 property. From \cite{becerra_guerrero_diametral_2015} and \cite{haller_diametral_2016}, we know that $X\oplus_\infty Y$ and $X\oplus_1 Y$ have the diametral strong diameter 2 property as soon as $X$ and $Y$ have the diametral strong diameter 2 property. We end this section by proving that there are no other absolute norms different from $\ell_1$- and $\ell_\infty$-norm which preserve the diametral strong diameter 2 property. Since the diametral strong diameter 2 property implies the strong diameter 2 property and the latter is stable only for absolute norms with the positive strong diameter 2 property, we can restrict our attention to them.

Consider an absolute normalized norm $N$ on $\mathbb R^2$, different from the $\ell_1$-norm and $\ell_\infty$-norm, such that $(\mathbb R^2,N)$ has the positive strong diameter 2 property. Thus, for some
$a,b\in[0,1)$ with $a>0$ or $b>0$, $N$ is defined by
\begin{equation}\label{eq: pos SD2P}
N(c,d)=\max\Big\{|c|,|d|,\dfrac{(1-b)|c|+(1-a)|d|}{1-ab}\Big\}\qquad \text{for all $(c,d)\in\mathbb R^2$.}
\end{equation}

\begin{proposition}\label{prop: absolute sum DSD2P}
Let $X$ and $Y$ be nontrivial Banach spaces and $N$  defined by (\ref{eq: pos SD2P}). Then $X\oplus_N Y$ does not have the diametral strong diameter 2 property.
\end{proposition}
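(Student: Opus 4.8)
The plan is to refute the diametral strong diameter 2 property by producing a \emph{single} convex combination $C$ of two slices of $B_{X\oplus_N Y}$ and a point $z_0\in C$ with $\sup_{z\in C}\|z_0-z\|_N<1+\|z_0\|_N$. The decisive point is to set things up so that this bound follows only from the planar geometry of $N$ and from the coordinate norms of the points involved; then it holds for \emph{arbitrary} nontrivial $X$ and $Y$, even when the factors are so rich that every slice already has diameter two.

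Since $N(c,d)=N(d,c)$ after interchanging $a$ and $b$, I may assume $b>0$. Fix $x_1\in S_X$, $y_1\in S_Y$ and functionals $\xast\in S_{\Xast}$, $\yast\in S_{\Yast}$ with $\xast(x_1)=\yast(y_1)=1$, and put $\lambda=\frac1{1+b}$, $\mu=\frac b{1+b}$. Realising the two endpoints $(1,b)$ and $(a,1)$ of the flat face of $B_{(\R^2,N)}$ inside the sum by $z_0^{(1)}=(x_1,by_1)$ and $z_0^{(2)}=(ax_1,-y_1)$, the weights are chosen exactly so that the $Y$-coordinates cancel:
\[
z_0:=\lambda z_0^{(1)}+\mu z_0^{(2)}=\Big(\tfrac{1+ab}{1+b}\,x_1,\;0\Big).
\]
Set $P=\frac{1+ab}{1+b}$ and $Q=\frac{2b}{1+b}$, so that $(P,Q)=\lambda(1,b)+\mu(a,1)$ is an \emph{interior} point of the flat face; hence $N(P,Q)=1$, while $\|z_0\|_N=N(P,0)=P$. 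I then take
\[
C=\lambda\,S_1+\mu\,S_2,\qquad S_1=S\big(B_{X\oplus_N Y},(\xast,0),\alpha\big),\quad S_2=S\big(B_{X\oplus_N Y},(0,-\yast),\alpha\big),
\]
with $\alpha>0$ small. As $(\xast,0)$ and $(0,-\yast)$ have $N^\ast$-norm one and $z_0^{(i)}\in S_i$, these are genuine slices and $z_0\in C$.

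The key step is that the polyhedral shape of $B_{(\R^2,N)}$ pins the coordinate norms of slice points near the relevant vertices. If $(u,v)\in S_1$ then $\|u\|\ge\xast(u)>1-\alpha$, and inserting this into $(1-b)\|u\|+(1-a)\|v\|\le 1-ab$ (valid since $\frac{(1-b)\|u\|+(1-a)\|v\|}{1-ab}\le N(\|u\|,\|v\|)\le1$) gives $\|v\|\le b+O(\alpha)$; symmetrically $(u,v)\in S_2$ forces $\|u\|\le a+O(\alpha)$. Consequently, for any $z=\lambda z^{(1)}+\mu z^{(2)}\in C$, writing $z=(x,y)$, the triangle inequality in each factor gives $\|x_0-x\|\le P+(\lambda+\mu a)+O(\alpha)=2P+O(\alpha)$ and $\|y\|\le(\lambda b+\mu)+O(\alpha)=Q+O(\alpha)$. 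By monotonicity of $N$ on the positive quadrant,
\[
\sup_{z\in C}\|z_0-z\|_N\le N\big(2P+O(\alpha),\,Q+O(\alpha)\big)\le N(2P,Q)+O(\alpha),
\]
and, crucially, this estimate used nothing about $X$ and $Y$ beyond the two norm bounds above.

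It remains to check $N(2P,Q)<1+P$ with enough room to absorb the error term, and this I would get from the \emph{strict} triangle inequality
\[
N\big((P,0)+(P,Q)\big)<N(P,0)+N(P,Q)=P+1 .
\]
Here is where cancelling a coordinate pays off: an interior point of the flat face is normed only by the dual vertex $g^\ast=\big(\frac{1-b}{1-ab},\frac{1-a}{1-ab}\big)$, whereas $g^\ast(P,0)=\frac{1-b}{1-ab}\,P<P=N(P,0)$ since $a<1$; thus $(P,0)$ and $(P,Q)$ admit no common norming functional and the triangle inequality is strict. Fixing $\alpha$ small enough that $N(2P,Q)+O(\alpha)<1+P=1+\|z_0\|_N$ shows that no $z\in C$ satisfies $\|z_0-z\|_N>1+\|z_0\|_N-\varepsilon$ for small $\varepsilon$, so $X\oplus_N Y$ fails the diametral strong diameter 2 property. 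The main obstacle is precisely obtaining a deficit that is uniform over all factors: the natural symmetric points $z_0$ (on the flat face, or with both coordinates present) yield reach exactly $1+\|z_0\|_N$ for rich factors, and it is the choice $(\|x_0\|,\|y_0\|)=(P,0)$, landing off the flat-face cone in the region of $N$ where a single axis dominates, that forces the strict inequality for every admissible $(a,b)$.
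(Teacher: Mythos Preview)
Your argument is correct. The overall scaffold matches the paper's---two axis slices $S_1,S_2$ defined by $(\xast,0)$ and $(0,\pm\yast)$, the observation that the polyhedral shape of $B_{(\R^2,N)}$ forces $\|v_1\|\le b+O(\alpha)$ and $\|u_2\|\le a+O(\alpha)$ for points $(u_i,v_i)\in S_i$, and then a uniform upper bound on $\|z_0-z\|_N$ for $z\in\lambda S_1+\mu S_2$---but the decisive step is handled differently. The paper keeps the test point of the form $(\lambda x,(1-\lambda)y)$ (both coordinates nonzero) and proves the planar inequality $N\big(2\lambda+(1-\lambda)a,\,2(1-\lambda)+\lambda b\big)<1+N(\lambda,1-\lambda)$ via the separate technical Lemma~\ref{lem: lambda}, which pins down the admissible range of $\lambda$. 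You instead pick the weights $\lambda=\tfrac{1}{1+b},\ \mu=\tfrac{b}{1+b}$ and slice representatives $(x_1,by_1),(ax_1,-y_1)$ so that the $Y$-coordinates cancel, landing on $z_0=(Px_1,0)$; the needed inequality $N(2P,Q)<1+P$ then drops out of the strict triangle inequality, since the unique norming functional of the interior face point $(P,Q)$ is $g^\ast=\big(\tfrac{1-b}{1-ab},\tfrac{1-a}{1-ab}\big)$ while any norming functional of $(P,0)$ must have first coordinate $1$, and $\tfrac{1-b}{1-ab}<1$ because $b>0$ and $a<1$. Your route is a little more conceptual and sidesteps Lemma~\ref{lem: lambda} entirely; the paper's route is more computational but makes the admissible choices of $\lambda$ fully explicit. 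Both exploit exactly the same structural fact about $N$, namely that the flat face between $(1,b)$ and $(a,1)$ controls the off-axis coordinate of points in the axis slices.
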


We will use the following elementary lemma.

\begin{lemma}\label{lem: lambda}
There is a $\lambda\in(0,1)$ such that
\[
N\Big(2\lambda+(1-\lambda)a,2(1-\lambda)+\lambda b\Big)<1+N(\lambda,1-\lambda).
\]
\end{lemma}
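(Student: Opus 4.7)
The plan is to exploit an algebraic identity between the ``middle'' linear terms of the max defining $N$ on the two sides of the desired inequality. Define
\begin{align*}
M_1(\lambda) &= \frac{(1-b)[2\lambda + (1-\lambda)a] + (1-a)[2(1-\lambda) + \lambda b]}{1-ab},\\
M_2(\lambda) &= \frac{(1-b)\lambda + (1-a)(1-\lambda)}{1-ab},
\end{align*}
so that $M_1(\lambda)$ and $M_2(\lambda)$ are the third candidates in the max formula for $N$ at $(2\lambda + (1-\lambda)a,\ 2(1-\lambda)+\lambda b)$ and $(\lambda, 1-\lambda)$, respectively. A short direct expansion yields the crucial identity
\[
M_1(\lambda) = 1 + M_2(\lambda) \qquad \text{for every }\lambda.
\]

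This identity reduces the lemma to finding $\lambda \in (0,1)$ satisfying the two conditions: (a) $M_1(\lambda) \geq \max\{2\lambda + (1-\lambda)a,\ 2(1-\lambda)+\lambda b\}$, and (b) $\max\{\lambda, 1-\lambda\} > M_2(\lambda)$. Indeed, (a) forces $N(2\lambda+(1-\lambda)a, 2(1-\lambda)+\lambda b) = M_1(\lambda)$, while (b) forces $N(\lambda, 1-\lambda) > M_2(\lambda)$, so
\[
N\bigl(2\lambda+(1-\lambda)a,\ 2(1-\lambda)+\lambda b\bigr) = M_1(\lambda) = 1 + M_2(\lambda) < 1 + N(\lambda, 1-\lambda).
\]

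The statement is invariant under the substitution $(\lambda, a, b) \mapsto (1-\lambda, b, a)$ (thanks to the identity $N_{a,b}(c,d) = N_{b,a}(d,c)$ that follows from the definition of $N$), so I may assume $a \geq b$; since at least one of $a,b$ is positive, this forces $a > 0$. I would then propose the explicit choice $\lambda_{\ast} = a/(2+a-ab)$, which plainly lies in $(0, 1/2]$. Direct calculation gives $2\lambda_{\ast} + (1-\lambda_{\ast})a = a(4-ab)/(2+a-ab)$ and $2(1-\lambda_{\ast}) + \lambda_{\ast} b = (4-ab)/(2+a-ab) = M_1(\lambda_{\ast})$; since $a<1$, the first quantity is dominated by the second, so (a) holds. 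For (b), a similar computation gives $M_2(\lambda_{\ast}) = (2-a)/(2+a-ab)$ and $1-\lambda_{\ast} = (2-ab)/(2+a-ab)$, whence the desired strict inequality $1-\lambda_{\ast} > M_2(\lambda_{\ast})$ reduces to $b < 1$, which is assumed.

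I expect the main obstacle to be spotting the identity $M_1 \equiv 1 + M_2$; without it one would be faced with a messy case analysis of the nine combinations of which term wins each three-term max. Once the identity is in hand, the value $\lambda_{\ast}$ is essentially forced on us: it is precisely the threshold at which condition (a) becomes tight against the dominant (second) coordinate, and the remaining verifications are routine arithmetic.
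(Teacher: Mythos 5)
Your proof is correct and rests on the same mechanism as the paper's: both reduce the claim to finding $\lambda$ at which the linear term $\frac{(1-b)|c|+(1-a)|d|}{1-ab}$ realizes the maximum in $N\bigl(2\lambda+(1-\lambda)a,\,2(1-\lambda)+\lambda b\bigr)$ but fails to realize it in $N(\lambda,1-\lambda)$, the point being that these two linear terms differ by exactly $1$. The only cosmetic difference is that you make the identity $M_1=1+M_2$ and the witness $\lambda_{\ast}=a/(2+a-ab)$ explicit (using the $(a,b)\leftrightarrow(b,a)$ symmetry to assume $a>0$), whereas the paper exhibits the full intervals of admissible $\lambda$ and observes they overlap when $a\neq 0$ or $b\neq 0$.
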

\begin{proof}
Assume that $\lambda\in(0,1)$. Denote by \[c=2\lambda+(1-\lambda)a\qquad \text{and}\qquad d=2(1-\lambda)+\lambda b.\] It is straightforward to show directly that the condition \[N(c,d)=\dfrac{(1-b)c+(1-a)d}{1-ab}\] is equivalent to
\[
\dfrac{a}{2+a-ab}\leq\lambda\leq\dfrac{2-ab}{2+b-ab},
\]
and the condition
\[\dfrac{(1-b)c+(1-a)d}{1-ab}<1+N(\lambda,1-\lambda)\]
is equivalent to
\[\lambda<\dfrac{a}{1+a}\qquad \text{or}\qquad \lambda>\dfrac{1}{1+b}.\]
Note that
\[\dfrac{a}{2+a-ab}\leq\dfrac{a}{1+a}\leq\dfrac{1}{1+b}\leq\dfrac{2-ab}{2+b-ab},\]
where the first inequality is strict if and only if $a\not=0$, and the last inequality is strict if and only if $b\not=0$.
\end{proof}

\begin{proof}[Proof of Proposition~\ref{prop: absolute sum DSD2P}]
By using Lemma~\ref{lem: lambda}, we choose $\lambda\in(0,1)$ such that
\[
N\Big(2\lambda+(1-\lambda)a,2(1-\lambda)+\lambda b\Big)<1+N(\lambda,(1-\lambda).
\]
Denote by
\[
\delta=1+N(\lambda,1-\lambda)-N\Big(2\lambda+(1-\lambda)a,2(1-\lambda)+\lambda b\Big).
\]
Choose any $\varepsilon\in(0,\delta/2)$. Let $\alpha>0$ be such that if $(c_1,d_1), (c_2,d_2)\in \mathbb R^2$ satisfy the conditions $N(c_1,d_1),N(c_2,d_2)\leq 1$, $|c_1|>1-\alpha$, and $|d_2|>1-\alpha$, then
\[
N\Big(2\lambda+(1-\lambda)|c_2|,2(1-\lambda)+\lambda |d_1|\Big)\leq N\Big(2\lambda+(1-\lambda)a,2(1-\lambda)+\lambda b\Big)+\varepsilon.
\]
Fix any $x^\ast\in S_{X^\ast}$ and $y^\ast\in S_{Y^\ast}$. Consider the slices $S_1=S(B_{X\oplus_N Y},(x^\ast,0),\alpha)$ and $S_2=S(B_{X\oplus_N Y},(0,y^\ast),\alpha)$. Choose $x\in S_X$ and $y\in S_Y$ such that $(x,0)\in S_1$ and $(0,y)\in S_2$.
Assuming that the Banach space $X\oplus_N Y$ has the diametral strong diameter 2 property, there exist $(u_1,v_1)\in S_1$ and $(u_2,v_2)\in S_2$ such that
\begin{align*}
\tilde N:=\|\lambda(x,0)&+(1-\lambda)(0,y)-\lambda (u_1,v_1)-(1-\lambda)(u_2,v_2)\|_N\geq\\&\geq \|\lambda (x,0)+(1-\lambda)(0,y)\|_N+1-\varepsilon.
\end{align*}
Since
\begin{align*}\tilde N&=N\Big(\|\lambda x-\lambda u_1-(1-\lambda)u_2\|,\|(1-\lambda)y-\lambda v_1-(1-\lambda)v_2\|\Big)\leq\\
&\leq N\Big(2\lambda+(1-\lambda)\|u_2\|,2(1-\lambda)+\lambda\|v_1\|\Big)\leq\\
&\leq N\Big(2\lambda+(1-\lambda) a,2(1-\lambda)+\lambda b\Big)+\varepsilon=\\
&=1+N(\lambda,1-\lambda)-\delta+\varepsilon,
\end{align*}
it follows that 
\[
\|\lambda (x,0)+(1-\lambda)(0,y)\|_N+1-\varepsilon\leq 1+N(\lambda,1-\lambda)-\delta+\varepsilon,
\]
i.e., $\delta\leq 2\varepsilon$, which is a contradiction.
\end{proof}

Combining \cite[Theorem~3.8]{becerra_guerrero_diametral_2015}, \cite[Theorem]{haller_diametral_2016}, and Proposition~\ref{prop: absolute sum DSD2P}, we get the following corollary.

\begin{corollary}\label{cor: abs sum DSD2P}
If $Z=X\oplus_N Y$ has the diametral strong diameter 2 property, then either $Z=X\oplus_1 Y$ or $Z=X\oplus_\infty Y$.
\end{corollary}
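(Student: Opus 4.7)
The plan is to argue by contraposition: suppose $Z=X\oplus_N Y$ has the diametral strong diameter $2$ property, and show that $N$ is forced to be either $\|\cdot\|_1$ or $\|\cdot\|_\infty$. A trivial reduction handles the case when $X$ or $Y$ is the zero space (then $Z$ is isometric to the nontrivial factor and can be written as either an $\ell_1$- or an $\ell_\infty$-sum), so assume $X$ and $Y$ are both nontrivial.

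Since the diametral strong diameter $2$ property implies the strong diameter $2$ property, $Z$ has the strong diameter $2$ property. The main step is to deduce that $(\mathbb R^2,N)$ has the positive strong diameter $2$ property. Set $a=\max\{s\ge 0:N(s,1)=1\}$ and $b=\max\{t\ge 0:N(1,t)=1\}$. By the remark following the definition of the positive strong diameter $2$ property, it suffices to verify that $N(\tfrac12(a,1)+\tfrac12(1,b))=1$. Fix $\xast\in S_{\Xast}$ and $\yast\in S_{\Yast}$, and for $\alpha>0$ form the slices $T_\alpha=S(B_Z,(\xast,0),\alpha)$ and $U_\alpha=S(B_Z,(0,\yast),\alpha)$. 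Any $(u_1,v_1)\in T_\alpha$ satisfies $\|u_1\|>1-\alpha$, so from $N(\|u_1\|,\|v_1\|)\le 1$ and continuity of $N$ we get $\|v_1\|\le b+o(1)$ as $\alpha\to 0^+$; symmetrically, $(u_2,v_2)\in U_\alpha$ gives $\|u_2\|\le a+o(1)$ and $\|v_2\|>1-\alpha$. Applying the triangle inequality coordinatewise and invoking monotonicity of $N$ on the positive orthant, every $w\in\tfrac12 T_\alpha+\tfrac12 U_\alpha$ satisfies $\|w\|_N\le N(\tfrac12(a,1)+\tfrac12(1,b))+o(1)$. But the strong diameter $2$ property forces $\tfrac12 T_\alpha+\tfrac12 U_\alpha$ to have diameter $2$, and the estimate $\|w-w'\|\le\|w\|+\|w'\|$ then yields $N(\tfrac12(a,1)+\tfrac12(1,b))\ge 1$. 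The reverse inequality is automatic from $N(a+1,b+1)\le N(a,1)+N(1,b)=2$, so equality holds.

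Finally, invoke the classification recorded immediately before Proposition~\ref{prop: absolute sum DSD2P}: every absolute normalized norm on $\mathbb R^2$ with the positive strong diameter $2$ property is either $\|\cdot\|_1$, $\|\cdot\|_\infty$, or of the form~(\ref{eq: pos SD2P}) for some $a,b\in[0,1)$ with $a>0$ or $b>0$. Were $N$ neither $\|\cdot\|_1$ nor $\|\cdot\|_\infty$, it would fall into the third category, and then Proposition~\ref{prop: absolute sum DSD2P} would directly contradict the hypothesis that $Z$ has the diametral strong diameter $2$ property; hence $N$ must be the $\ell_1$- or $\ell_\infty$-norm.

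The chief technical obstacle is the middle paragraph, namely extracting the positive strong diameter $2$ property for the two-dimensional norm $N$ directly from the strong diameter $2$ property of $Z$, bypassing the need to verify the strong diameter $2$ property separately for the factors $X$ and $Y$. Once this is in hand, the remaining argument is a clean combination of the two-dimensional classification with Proposition~\ref{prop: absolute sum DSD2P}.
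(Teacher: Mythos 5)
Your proof is correct and follows the same chain as the paper: pass from the diametral strong diameter 2 property to the strong diameter 2 property, deduce that $(\R^2,N)$ has the positive strong diameter 2 property, invoke the classification of such norms, and rule out the nontrivial case (\ref{eq: pos SD2P}) via Proposition~\ref{prop: absolute sum DSD2P}. The only place you deviate is the middle step: the paper simply cites the necessity half of Theorem~\ref{thm: absolute sum has SD2P} (whose proof does not use that $X$ and $Y$ themselves have the strong diameter 2 property), whereas you re-derive exactly the instance you need with the two slices $S(B_Z,(\xast,0),\alpha)$ and $S(B_Z,(0,\yast),\alpha)$ and the extremal values $a=\max\{s:N(s,1)=1\}$, $b=\max\{t:N(1,t)=1\}$. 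Your bespoke argument is sound (the bound $\|v_1\|\le b+o(1)$ is a correct compactness/monotonicity consequence of $N(\|u_1\|,\|v_1\|)\le1$ and $\|u_1\|>1-\alpha$, and the diameter-$2$ estimate closes the loop), and it has the small virtue of making explicit that only the sum, not the factors, needs the strong diameter 2 property; but it is logically redundant given Theorem~\ref{thm: absolute sum has SD2P}, which you could have cited directly.
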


\bibliographystyle{amsplain}
\footnotesize

\begin{thebibliography}{1}

\bibitem{abrahamsen_remarks_2013}
	T.~A.~Abrahamsen, V.~Lima, and O.~Nygaard, \emph{Remarks on diameter 2 properties}, J. Convex Anal. \textbf{20} (2013), 439--452.

\bibitem{acosta_stability_2015}
M.~D.~Acosta, J.~Becerra~Guerrero, and G.~{L}\'{o}pez-{P}\'{e}rez, \emph{Stability results of diameter two properties}, J. Convex Anal. \textbf{22} (2015), 1--17.

\bibitem{becerra_guerrero_diametral_2015}
J.~Becerra~Guerrero, G.~{L}\'{o}pez-{P}\'{e}rez, and A.~Rueda~Zoca,
  \emph{Diametral diameter two properties in Banach spaces}, arXiv:1509.02061v4.
  

\bibitem{becerra_guerrero_octahedral_2014}
J.~Becerra~Guerrero, G.~{L}\'{o}pez-{P}\'{e}rez, and A.~Rueda~Zoca,
  \emph{Octahedral norms and convex combination of slices in {Banach} spaces},
  J. Funct. Anal. \textbf{266} (2014), 2424--2435.
  
\bibitem{becerra_guerrero_octahedral_2015}
J.~Becerra~Guerrero, G.~{L}\'{o}pez-{P}\'{e}rez, and A.~Rueda~Zoca, \emph{Octahedral norms in spaces of operators}, J. Math. Anal. Appl.
  \textbf{427} (2015), 171--184.


\bibitem{bilik_narrow_2005}
	D.~Bilik, V.~Kadets, R.~Shvidkoy, and D.~Werner, \emph{Narrow operators and the Daugavet property for ultraproducts}, Positivity \textbf{9} (2005), 45--62.


\bibitem{bonsall_numerical_1973}
F.~F.~Bonsall and J.~Duncan, \emph{Numerical Ranges II}, London Mathematical Society
Lecture Notes Series 10, Cambridge University Press, New York (1973).

\bibitem{deville_dual_1988}
R.~Deville, \emph{A dual characterisation of the existence of small
  combinations of slices}, Bull. Austral. Math. Soc. \textbf{37} (1988),
  113--120.

\bibitem{godefroy_metric_1989}
G.~Godefroy, \emph{Metric characterization of first {Baire} class linear forms
  and octahedral norms}, Studia Math. \textbf{95} (1989), 1--15.
  
\bibitem{godefroy_normes}
G.~Godefroy and B.~Maurey, \emph{{N}ormes lisses et normes anguleuses sur les
  espaces de {B}anach s\'eparables}, preprint.

\bibitem{haller_duality_2015}
R.~Haller, J.~Langemets, and M.~P\~oldvere, \emph{On duality of diameter 2
  properties}, J. Convex Anal. \textbf{22} (2015), 465--483.
  
\bibitem{haller_rough_2017}
R.~Haller, J.~Langemets, and M.~P\~oldvere, \emph{Rough norms in spaces of operators}, to appear in Mathematische Nachrichten (2017).
  

\bibitem{haller_diametral_2016}
R.~Haller, K.~Pirk, and M.~P\~oldvere,
\emph{Diametral strong diameter two property of Banach spaces is stable under direct sums with 1-norm}, Acta Comment. Univ. Tartu. Math. \textbf{20} (2016), 101--105.

\bibitem{kadets_thickness_2011}
V.~Kadets, V.~Shepelska, and D.~Werner, \emph{Thickness of the unit sphere,
  {$\ell_1$}-types, and the almost {D}augavet property}, Houston J. Math.
  \textbf{37} (2011), 867--878.
  
\bibitem{lücking_subspaces_2011}
S.~L\"ucking, \emph{Subspaces of almost Daugavet spaces}, Proc. Amer. Math. Soc.
  \textbf{139} (2011), 2777--2782.


\end{thebibliography}

\end{document}